\newcommand{\comment}[1]{}
\newenvironment{equations}{\setlength\arraycolsep{1pt}\begin{eqnarray*}}{\end{eqnarray*}}
\newcounter{temp}
\newcommand{\reiterate}[2]{
  \setcounter{temp}{\value{mythm}}
  \setcounter{mythm}{#1} #2
  \setcounter{mythm}{\value{temp}}
}
\newtheorem{theorem}{Theorem}[section]
\newtheorem{lemma}[theorem]{Lemma}
\newtheorem{proposition}[theorem]{Proposition}
\newtheorem{mythm}{Theorem}
\theoremstyle{definition}
\newtheorem{definition}[theorem]{Definition}
\newtheorem{remark}[theorem]{Remark}
\newcommand{\e}{\mathbf{e}}
\newcommand{\eo}[1]{\ensuremath{\mathbf{#1}}}
\newcommand{\id}{\mathbf{d}}
\newcommand{\di}{\boldsymbol{\delta}}
\newcommand{\metric}[2]{\langle #1,#2 \rangle}
\newcommand{\gdot}{\dot{\gamma}}
\newcommand{\NormSq}[2]{\|#1\|_{#2}^{2}}
\newcommand{\Kmag}{\check{K}}
\renewcommand{\div}{\mbox{div}\,}
\renewcommand{\H}{\mathcal{H}}
\newcommand{\K}{\mathcal{K}}
\newcommand{\R}{\mathcal{R}}
\newcommand{\V}{\mathcal{V}}
\newcommand{\RR}{\mathbb{R}}
\title{On the injectivity of the X-ray transform for Anosov thermostats}
\author{Dan Jane and Gabriel P. Paternain}
\date{}
\begin{document}

\maketitle

\begin{abstract}  We consider Anosov thermostats on a closed surface and the X-ray transform on functions which are up to degree two in the velocities. We show that the subspace where the X-ray transform fails to be s-injective is finite dimensional. Furthermore, if the surface is negatively curved and the thermostat is pure Gaussian (i.e. no magnetic field is present)\comment{(i.e., no magnetic field is present)}, then the X-ray transform is s-injective. \end{abstract}

\section{Introduction}

\nocite{sharafutdinov-1994-}

\comment{Let $M$ be a \emph{surface}, a smooth, closed, orientable 2-manifold with a Riemannian metric and let $\phi_t: SM \to SM$ be a flow on its unit sphere bundle.}Let $M$ be a \emph{surface}, a smooth, closed, orientable 2-manifold with a Riemannian metric, and let $\phi_t: SM \to SM$ be a flow on its unit sphere bundle.

\comment{Given a smooth function $g:SM\to\RR$, the map $I(g)(\Gamma) = \int_\Gamma g$, where $\Gamma$ is any closed trajectory of $\phi$, is the \emph{X-ray transform} of $g$}Given a smooth function $g:SM\to\RR$, the \emph{X-ray transform} of $g$ is the map determined by $I(g)(\Gamma) = \int_\Gamma g$, where $\Gamma$ is any closed trajectory of $\phi$. Is it possible to recover $g$ if we just know its integrals along every closed orbit? Since $I$ is linear, this is obviously equivalent to asking when $I$ is injective. In general $I$ is not injective, because non-zero functions of the form $g= d/dt(u \circ \phi_t)|_{t=0}$ (called {\it coboundaries}), where
$u:SM\to\RR$ is a smooth function,
lie in the kernel of the X-ray transform.
In fact, if the flow $\phi_t$ is Anosov, the smooth Liv\v sic Theorem \cite{llave-1986-} tells us that $g$ is in the kernel of the X-ray transform
if and only if $g$ is a coboundary.

Classically, one considered recovering the function $g$ in the special case
$g \in C^\infty(M)$, a function of position only rather than both position
and velocity. In the case $g$ is linear in velocity we can write $g(x, v) =
g_i(x) v^i$, and the study of the X-ray transform has applications to
Doppler tomography, photoelasticity and other physical problems \cite{sharafutdinov-1994-}.

In this paper we are interested in studying injectivity properties of the X-ray transform for functions $g$ which are the restriction to $SM$ of a symmetric 2-tensor $q$ plus a 1-form $\sigma$. The motivation for looking at the kernel of $I$ for this class of functions comes from rigidity questions in dynamical systems and spectral geometry. Symmetric 2-tensors and 1-forms appear naturally when one linearises rigidity questions involving Riemannian metrics and magnetic fields (cf. \cite{guillemin-1980-} and \cite{dairbekov-2006-}).
There is a similar circle of questions for manifolds with boundary (motivated by the boundary rigidity problem), but we do not consider these here.

Quite a lot is known about injectivity properties of the X-ray transform for geodesic flows and in this paper we wish to extend these to more general flows on unit tangent bundles. We consider thermostats; \comment{for $i$ a rotation by $\pi/2$ in the tangent planes, let $\gamma$ be a path in $M$ determined by}so for $i$ a rotation by $\pi/2$ in the tangent planes, a generic path $\gamma$ in $M$ is determined by \begin{equation} \label{EQNnewton} \frac{D \gdot}{dt} = \lambda(\gamma, \gdot) \, i \gdot, \qquad \mbox{where } \lambda(x,v) = f(x) + \metric{\e(x)}{iv}, \end{equation} $f$ some function and $\e$ some vector field.
To simplify the exposition we shall assume that 
$\e$ has zero divergence (this is no restriction at all, cf. \cite[Lemma 2.2]{paternain-2007-}).

\comment{As above, $\phi$ will denote the induced flow, and a simple calculation confirms the sphere bundle is invariant. Notice that if $f=0$ and $\e=0$ then the system reduces to the geodesic flow; if only $\e=0$ then the system is a {\it magnetic flow}.  Let $F$ be the vector field induced by the flow.}As above, $\phi$ will denote the induced flow, and a simple calculation confirms the sphere bundle is invariant.  Let $F$ be the vector field induced by the flow. Notice that if $f=0$ and $\e=0$ then the system reduces to the geodesic flow; if only $\e=0$ then the system is a {\it magnetic flow}.

Given a pair $[q,\sigma]$, where $q$ is a symmetric 2-tensor and $\sigma$ is a 1-form, we will say that $[q,\sigma]$ is a {\it potential pair} if there exist a function $h$ on $M$ and 1-form $\psi$ such that $F(h+\psi)=q+\sigma$ (we consider restrictions of all the objects to $SM$). \comment{We note that because of the special form of $\lambda$, $F(h+\psi)$ is of degree at most two in the velocities}We note that because of the special form of $\lambda$ in (\ref{EQNnewton}), $F(h+\psi)$ is of degree at most two in the velocities. Clearly any potential pair gives rise to a function in the kernel of $I$. In Section 4 we will show that for pairs $[q,\sigma]$ there is an orthogonal decomposition into potential and {\it solenoidal} pairs. We say that $I$ is {\it s-injective} if $I[q,\sigma]=0$ implies that $[q,\sigma]$ is a potential pair. Equivalently, the restriction of $I$ to solenoidal pairs is injective.

Here is the main question we wish to address:

\medskip

\noindent{\bf Question.} If the thermostat $\phi$ is Anosov, is $I$ always s-injective?

\medskip

Note that if $I[q,\sigma]=0$, the smooth Liv\v sic Theorem \cite{llave-1986-} gives a smooth
function $u:SM\to\RR$ such that $F(u)=q+\sigma$ and the main point
we are discussing in this paper is whether $u$ is at most of {\it degree one}
in the velocities.

\comment{For geodesic flows, V. Guillemin and D. Kazhdan \cite{guillemin-1980-} proved s-injectivity when $M$ has negative curvature and the extension to higher dimensions was done  by C. Croke and V. Sharafutdinov \cite{croke-1998-}. Relaxing the constraint on the curvature, N. Dairbekov and Sharafutdinov showed that by using the Anosov structure one could modify the Pestov identity and still obtain positive results \cite{dairbekov-2003-}. Here, s-injectivity was proved up to a finite dimensional subspace.}

For geodesic flows, V. Guillemin and D. Kazhdan \cite{guillemin-1980-} proved s-injectivity when $M$ has negative curvature and the extension to higher dimensions was done  by C. Croke and V. Sharafutdinov \cite{croke-1998-}. A key ingredient in the latter proof was the \emph{Pestov Identity}. Relaxing the constraint on the curvature, N. Dairbekov and Sharafutdinov showed that by using the Anosov structure one could modify the Pestov identity and still obtain positive results \cite{dairbekov-2003-}. Here, s-injectivity was proved up to a finite dimensional subspace.

For thermostats, Dairbekov and the second author \cite{dairbekov-2007a} showed that the integrals along closed orbits determined a function $r(x) + \sigma_x(v)$, where $\sigma$ is a 1-form, up to an exact 1-form; \comment{here, exact 1-forms form the space of potential functions.}for functions of degree at most one in velocity, exact 1-forms form the space of potential functions.
This result allowed the authors to show that an Anosov thermostat on a surface
has zero entropy production if and only if the external field $\e$ has a global potential.

Let $P$ be the set of all smooth potential pairs and let $Z$ be the kernel of $I$ acting on smooth functions of the form $[q,\sigma]$.

\begin{mythm} \label{THMfindim} For an Anosov thermostat on a surface, the inclusion $P \subset Z$ has finite codimension. \end{mythm}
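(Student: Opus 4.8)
The plan is to reduce the statement to a finiteness result for certain cohomological data attached to the Anosov flow $\phi$, in the spirit of Dairbekov--Sharafutdinov \cite{dairbekov-2003-}. First I would set up the basic machinery: using the moving frame $(X, H, V)$ on $SM$ (with $X=F$ the thermostat vector field, $V$ the vertical field, and $H=[V,X]$ the ``horizontal'' field), I expand a candidate function $u$ with $F(u)=q+\sigma$ into its Fourier components $u=\sum_n u_n$ with respect to the circle action generated by $V$. The hypothesis $I[q,\sigma]=0$ and the smooth Liv\v sic theorem give such a $u\in C^\infty(SM)$, and the content of the theorem is that, modulo potential pairs, $u$ being forced to have degree $\le 2$ (i.e.\ only components $u_{-2},\dots,u_2$) can fail only in a finite-dimensional way. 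So I would instead work directly with the ``tail'' $w=\sum_{|n|\ge 3}u_n$ and show the space of possible tails is finite dimensional.

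The core step is a Pestov-type energy identity adapted to the thermostat, as already used in \cite{dairbekov-2007a}. Applying the commutator relations for $(X,H,V)$ — which now involve the curvature $K$ together with derivatives of $f$ and $\e$ rather than just $K$ — to $Xu=q+\sigma$ and integrating over $SM$, one gets an identity of the schematic form $\|Vu\|^2$-type terms $=$ $\|$(something)$u\|^2$ $+$ (curvature/thermostat zeroth-order terms) $+$ (contribution of $q+\sigma$, which lives in low Fourier degrees). The decisive point is that when restricted to the high-Fourier-degree part $w$, the right-hand side controls $w$ up to a \emph{compact} perturbation: the ``bad'' zeroth-order terms coming from $K$, $f$, $\e$ and their derivatives are bounded operators, and the genuinely negative/indefinite contributions are confined to finitely many Fourier modes because of the Anosov assumption (which, via the construction of an invariant subbundle / a suitable semibasic solution of a Riccati-type equation along the flow, lets one absorb the indefinite term into an exact expression). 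Concretely, one produces a first-order operator $\eta$ on $SM$, built from the stable/unstable Riccati solution of the thermostat, such that $\|\eta w\|^2$ dominates the problematic terms in the identity for all $w$ supported in Fourier degrees $|n|\ge N_0$. This is exactly the mechanism by which Dairbekov--Sharafutdinov trade the curvature sign for Anosovness.

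Granting such an inequality $\|w\|_{H^1}^2 \le C\big(\|Xw\|^2 + \|w\|^2\big)$ valid on the high-frequency part with $Xw$ controlled (since $Xu=q+\sigma$ forces $Xw$ to be determined by the low modes $u_{-2},\dots,u_2$ and hence by $q+\sigma$ itself), one concludes that the linear map sending $[q,\sigma]\in Z$ to the tail $w$ of its Liv\v sic primitive has image inside the kernel of an elliptic-type operator on $SM$ — more precisely, inside a closed subspace on which $F$ acts with the property that the solution operator is bounded only up to a finite-dimensional cokernel. Equivalently: the transport equation $Fw = (\text{data in low degrees})$ for the tail is solvable, and the space of $w$ for which a smooth solution of degree $\le 2$ does \emph{not} exist is the (finite-dimensional) kernel of the adjoint obstruction. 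Then $Z/P$ injects into this finite-dimensional space, giving the codimension bound.

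The main obstacle I anticipate is establishing the high-frequency coercive estimate with a genuinely \emph{uniform} constant across all Fourier degrees $|n|\ge N_0$: one must check that the zeroth-order terms involving $f$, $\e$, $\nabla f$, $\div\e$ (here the assumed vanishing of $\div\e$ helps) and $K$ do not grow with $n$, while the Anosov-derived term $\|\eta w\|^2$ grows quadratically in $n$ and thus eventually dominates — but proving the $n$-independence of the constant, and handling the cross-terms between $\eta$ and $V$ in the Pestov identity for a non-geodesic (magnetic plus Gaussian) flow, is where the real work lies. A secondary subtlety is making the reduction ``$w$ has degree $\le 2$ iff $[q,\sigma]\in P$'' precise, i.e.\ identifying the finitely many surviving low modes $u_{-2},u_{-1},u_0,u_1,u_2$ with the components of the potential pair $(h,\psi)$ via $F(h+\psi)=q+\sigma$; this is essentially bookkeeping with the frame but must be done carefully to see that $P$ is exactly the preimage of $\{w=0\}$.
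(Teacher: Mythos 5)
Your outline replaces the paper's argument by a fiberwise Fourier decomposition plus a high-frequency coercivity/Fredholm scheme, and as written it has genuine gaps. The first is the bookkeeping you defer: a potential pair is by definition $F(h+\psi)$ with $h+\psi$ of vertical Fourier degree at most \emph{one}, and since an Anosov thermostat is transitive (Ghys \cite{ghys-1984-}) Liv\v sic primitives are unique up to constants; hence $[q,\sigma]\in Z$ is potential exactly when the tail $\sum_{|n|\ge 2}u_n$ of its primitive vanishes. With your cutoff $|n|\ge 3$ (``$u$ of degree $\le 2$''), an element of $Z$ whose primitive has a genuine degree-two component has zero tail but is not potential, so $Z/P$ does not inject into your space of tails and its finite-dimensionality would not bound the codimension.

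The second, more serious, gap is that the analytic core is asserted rather than proved. The thermostat Pestov identity is not diagonal in the vertical Fourier decomposition: $F=X+\lambda V$ with $\lambda$ of degree one, and $H_c=H+cV$ with $c$ only $C^1$, both couple neighbouring modes, and there is no justification for the claim that the indefinite terms are ``confined to finitely many Fourier modes because of the Anosov assumption''. The mechanism that actually works (here and in \cite{dairbekov-2003-}) is not a frequency cutoff: one takes $c=c^{s},c^{u}$ to be the \emph{exact} Riccati solutions determined by the weak stable/unstable bundles, using that these bundles are $C^1$ \cite{hirsch-1970-} and transverse to the vertical fibration \cite{dairbekov-2007a} — facts your sketch never invokes and which are precisely where the thermostat and surface hypotheses enter — so that the curvature term in the integrated identity vanishes identically (Lemma \ref{THMpestov}); using both solutions with $c^s\neq c^u$ one recovers the full $H^1$ norm and gets the a priori bound of Proposition \ref{THMkeyinequality}, $\|u\|^2\le C\bigl(|u|^2+|\di[q,\sigma]|\cdot|u|\bigr)$. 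Moreover, even granting your estimate $\|w\|^2_{H^1}\le C(\|Fw\|^2+\|w\|^2)$ on the tail, finite-dimensionality does not follow: $Fw$ is determined by the \emph{unknown} low modes of $u$, not by $[q,\sigma]$ itself, so the estimate yields no precompactness for the set of admissible tails, and the ``elliptic-type operator'' or ``adjoint obstruction'' whose kernel is to be finite-dimensional is never exhibited. The paper closes the loop differently: it decomposes each $z_k\in Z\setminus P$ as $\id x_k+y_k$ with $\di y_k=0$ via the elliptic operator $\di\id$ (see (\ref{EQNpssplitting})), applies the a priori bound to primitives $u_k$ of the solenoidal parts (so the $\di$ term drops out), normalises the $u_k$ to an orthonormal set in $L^2$, and contradicts the compactness of $H^1\hookrightarrow L^2$. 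Some such compactness step is what your proposal is missing.
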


In other words $I$ is s-injective up to a finite dimensional subspace. It is unknown if $P=Z$ always.

A pure \emph{(Gaussian)} thermostat is one with no magnetic term. Thus $\ddot{\gamma} = \lambda(\gamma,  \dot{\gamma}) \, i \dot{\gamma}$, where $\lambda(x,v) = \metric{\e(x)}{iv}$. Pure thermostats are reversible, like the geodesic flow, whereas magnetic flows are not.

\begin{mythm} \label{THMpurethermostats} For a pure Gaussian thermostat on a negatively curved surface, the X-ray transform is s-injective. \end{mythm}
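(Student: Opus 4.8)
The plan is to run the Guillemin--Kazhdan argument for geodesic flows with raising and lowering operators twisted by the thermostat term, and to locate exactly where the two hypotheses (pure Gaussian, negative curvature) are used. On $SM$ take the canonical frame $\{X,H,V\}$, so that $F=X+\lambda V$ with $V$ the generator of the fibrewise rotation, and recall the operators $\eta_{\pm}=\tfrac12(X\mp iH)$: they shift the fibrewise Fourier degree by $\pm1$, satisfy $\eta_{\pm}^{*}=-\eta_{\mp}$ on $L^{2}(SM)$, and $[\eta_{+},\eta_{-}]$ acts on the degree-$m$ component as multiplication by $-\tfrac m2 K$ ($K$ the Gaussian curvature), so that $\|\eta_{+}u_{m}\|^{2}-\|\eta_{-}u_{m}\|^{2}=-\tfrac m2\int_{SM}K|u_{m}|^{2}$. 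Since the thermostat is pure Gaussian, $\lambda=\metric{\e}{iv}$ has Fourier components only in degrees $\pm1$; writing $\lambda=\lambda_{1}+\lambda_{-1}$ with $\overline{\lambda_{1}}=\lambda_{-1}$, one gets a clean splitting $F=F_{+}+F_{-}$, where $F_{\pm}=\eta_{\pm}+\lambda_{\pm1}V$, with $F_{+}$ raising and $F_{-}$ lowering the Fourier degree by one, and with no degree-preserving part --- this is precisely what the pure-Gaussian hypothesis buys us (a magnetic term $f$ would contribute a degree-preserving piece $fV$).

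Now suppose $I[q,\sigma]=0$. Since $\phi$ is Anosov, the smooth Liv\v sic theorem gives $u\in C^{\infty}(SM)$, which we may take real, with $Fu=q+\sigma$; as $q+\sigma$ has degree $\le2$ in the velocities, it suffices to prove $u_{m}=0$ for all $m\ge2$, for then $u=h+\psi$ with $h$ a function on $M$ and $\psi$ a $1$-form, so $F(h+\psi)=q+\sigma$ and $[q,\sigma]$ is a potential pair. The key ingredient is a \emph{twisted Pestov identity}
\[
  \|F_{+}u_{m}\|_{L^{2}(SM)}^{2}-\|F_{-}u_{m}\|_{L^{2}(SM)}^{2}=-\frac m2\int_{SM}K\,|u_{m}|^{2},
\]
valid for every $m$ --- formally the same as the classical identity for the geodesic flow (the case $\lambda=0$, $F_{\pm}=\eta_{\pm}$), now for an arbitrary field $\e$. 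Granting it, one finishes as in the negatively curved geodesic case: for $m\ge2$ the degree-$(m+1)$ component of $Fu=q+\sigma$ reads $F_{+}u_{m}+F_{-}u_{m+2}=0$, so $\|F_{+}u_{m}\|=\|F_{-}u_{m+2}\|$; summing the identity over $m=N,N+2,N+4,\dots$ telescopes (the tail $\|F_{-}u_{N+2J}\|$ tends to $0$ since the Fourier coefficients of the smooth function $u$ decay rapidly) to
\[
  \|F_{-}u_{N}\|^{2}=\tfrac12\sum_{j\ge0}(N+2j)\int_{SM}K\,|u_{N+2j}|^{2},
\]
and as $K<0$ the right side is $\le0$ while the left side is $\ge0$; hence $u_{N+2j}\equiv0$ for all $j$, in particular $u_{N}\equiv0$, and this holds for every $N\ge2$.

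Thus everything is reduced to the twisted Pestov identity, and proving it is the one real obstacle. Computing $F_{\pm}^{*}$ from $F^{*}=-F-V\lambda$ and expanding, one obtains
\[
  \|F_{+}u_{m}\|^{2}-\|F_{-}u_{m}\|^{2}=\big\langle u_{m},\,\big([F_{+},F_{-}]+i\lambda_{-1}F_{+}+i\lambda_{1}F_{-}\big)u_{m}\big\rangle .
\]
A direct expansion of $[F_{+},F_{-}]$ produces, besides $[\eta_{+},\eta_{-}]$, the terms $-i\lambda_{-1}\eta_{+}u_{m}$, $-i\lambda_{1}\eta_{-}u_{m}$, $2m|\lambda_{1}|^{2}u_{m}$, and one further term $im(\eta_{+}\lambda_{-1}-\eta_{-}\lambda_{1})u_{m}$; the first three are cancelled \emph{identically} by $i\lambda_{-1}F_{+}u_{m}+i\lambda_{1}F_{-}u_{m}$, so the whole expression collapses to $\langle u_{m},[\eta_{+},\eta_{-}]u_{m}\rangle=-\tfrac m2\int_{SM}K|u_{m}|^{2}$ --- \emph{provided} the remaining term $\eta_{+}\lambda_{-1}-\eta_{-}\lambda_{1}$ drops out. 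This is the only place the divergence-free assumption enters: $\eta_{+}\lambda_{-1}-\eta_{-}\lambda_{1}$ is a constant multiple of the degree-zero Fourier component of $H\lambda$, and a short computation identifies that component with $\tfrac12\div\e$, which vanishes by our standing convention (equivalently $\overline{\eta_{+}\lambda_{-1}}=\eta_{-}\lambda_{1}$ always, and $\div\e=0$ forces the two to be equal, so the term is zero). Organising this cancellation carefully is the crux; once it is in place, the identity --- and with it the theorem --- follows.
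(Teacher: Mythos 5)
Your proposal is correct, and it takes a genuinely different route from the paper. The paper gets around the failure of the naive curvature bound via the Sharafutdinov--Uhlmann device: it applies the integrated Pestov identity to $\varphi=V^{2}u+u$ with the exact Riccati solutions $c^{s},c^{u}$ coming from the $C^{1}$ weak stable/unstable bundles, uses reversibility of pure thermostats ($c^{s}(x,-v)=-c^{u}(x,v)$), density of closed orbits and $K<0$ to show $(\theta+c^{s})(\theta+c^{u})<0$, hence $\R_{c}<0$ for $c=(c^{s}+c^{u})/2$; this forces $F\varphi=0$, so $\varphi$ is constant and $u$ has degree at most one (after first removing $\sigma$ via the degree-one result of \cite{dairbekov-2007a}). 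You instead run a twisted Guillemin--Kazhdan argument: pure Gaussian means $\lambda$ has only Fourier modes $\pm1$, so $F=F_{+}+F_{-}$ with no degree-preserving part, and your adjoint/commutator computation is right --- expanding $[F_{+},F_{-}]+i\lambda_{-1}F_{+}+i\lambda_{1}F_{-}$ leaves exactly $\tfrac{i}{2}KV+(\eta_{+}\lambda_{-1}-\eta_{-}\lambda_{1})V$, and indeed $\eta_{+}\lambda_{-1}-\eta_{-}\lambda_{1}=-\tfrac{i}{2}\,\div\e$, which vanishes under the standing divergence-free convention, so the per-mode identity $\|F_{+}u_{m}\|^{2}-\|F_{-}u_{m}\|^{2}=-\tfrac{m}{2}\int_{SM}K|u_{m}|^{2}$ holds and the telescoping with $K<0$ closes the argument. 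Comparing the two: your route avoids the Anosov splitting and the Riccati equation entirely (hyperbolicity enters only through Wojtkowski's theorem and the Liv\v sic step), treats the $1$-form and $2$-tensor parts simultaneously rather than by a separate reduction, and extends verbatim to symmetric tensors of any order, whereas the substitution $\varphi=V^{2}u+u$ is tailored to degree at most two; on the other hand, the paper's method works under the weaker hypotheses noted in its closing remarks (Anosov plus $(\theta+c^{s})(\theta+c^{u})\le0$, or the alternative choice $c=-\theta$ with $K\le 0$), where pointwise negative curvature is not required, while your telescoping as written genuinely uses $K<0$.
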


We note that thanks to a theorem M. Wojtkowski \cite{wojtkowski-2000-} a Gaussian thermostat on a surface of negative curvature is always Anosov (recall that we are assuming that $\e$ has zero divergence). However these thermostats could have very large geodesic curvatures (i.e. $\e$ could be arbitrarily large) and the usual direct approach using the Pestov identity does not work. Indeed, the bound needed is derived in \cite[Theorem 4.6]{dairbekov-2007b};
\begin{displaymath}
K - H(\lambda) + \frac{9}{5} \lambda^2 \leq 0,
\end{displaymath}
where $H$ is a vector field on $SM$ defined in Section \ref{SECTpestov}. This is clearly not true for arbitrary $\lambda$. We bypass this problem using the ideas of Sharafutdinov and G. Uhlmann in \cite{sharafutdinov-2000-}.

There is a key difficulty in extending Theorem A to higher dimensions. It is unknown if the weak bundles are transversal to the vertical fibration for thermostats in dimension $\geq 3$. This is essential in the proof of Theorem A and it is proved in dimension 2 in \cite{dairbekov-2007a}. 

For magnetic flows this property is known \cite{ppaternain-1994-} and in principle there should be no serious obstacles proving Theorem A for them using the Pestov identities developed in \cite{dairbekov-2007-}. However, even the following remains open:

\medskip

\noindent{\bf Question.} Consider an Anosov magnetic flow on a surface, so that $\e = 0$ in the equation of motion (\ref{EQNnewton}), and assume $K-H(\lambda)+\lambda^2\leq 0$. Is the X-ray transform for this flow s-injective on pairs $[q, \sigma]$? 

\section{An integral identity} \label{SECTpestov}
Let $M$ be an oriented Riemannian surface, so that the linear map $i: T_x M \to T_x M$ is well defined as follows; for $v \in T_xM$ with unit norm we require $\{v, iv\}$ to be an oriented orthonormal basis.  Recall from basic surface geometry that the sphere bundle $ \pi : SM \to M$ is a principle circle bundle; see \cite{singer-1988-}.  Let $V$ be the infinitesimal generator of the circle action.

Let $X$ be the vector field over $SM$ induced by the geodesic flow. Define a third vector field $H$ \comment{induced by}using a flow $\varphi$ orthogonal to the geodesic flow: given $(x,v) \in SM$, let $\gamma_{(x,v^\bot)}(t)$ be the unique geodesic from $x$ heading in a direction (with the orientation) perpendicular from $v$. Define $\varphi_t(x,v) = (\gamma_{(x,v^\bot)}(t),u(t))$, where $u$ is the parallel translate of $v$ along $\gamma$, and $H$ the vector field on $SM$ that $\varphi$ induces. Notice that \begin{displaymath} \metric{d\pi_{(x,v)}(H)}{iv} = 1. \end{displaymath}

A basic result in Riemannian Geometry says that $T_{(x,v)} SM$ is spanned by $\{X, H, V\}$. Again referring to \cite{singer-1988-}, we recall that \begin{equation} \label{EQNbasiccomms} [V, X] = H, \qquad [H, V] = X, \qquad [X, H] = KV, \end{equation} where $K$ is the Gaussian curvature of $M$.

As in \cite{sharafutdinov-2000-}, for real functions $\lambda$ and $c$ on $SM$ we define a modified basis $\{F, H_c, V\}$ of $T_{(x,v)}SM$ in order to explore flows other than the geodesic flow, with \begin{equation} \label{EQNadjustedbasis} F := X + \lambda V, \qquad H_c := H + cV. \end{equation}
For example, if $\lambda(x,v) = f(x)$ then the flow generated by $F$ is a magnetic flow, whereas if $\lambda(x,v) = \omega_x(v)$, for some 1-form $\omega$, then we have a Gaussian thermostat flow.  The importance of $c$ will \comment{be seen}become clear in Section \ref{SECTjacobi}.  \comment{Notice}It is worth pointing out that for a path $\gamma$ induced by the vector field $F$, \begin{displaymath} \frac{D \gdot}{dt} = \lambda \, i \gdot, \end{displaymath} as in (\ref{EQNnewton}) in the introduction.

We will need a curvature type function on the sphere bundle, $\Kmag(x,v) := K(x) - H_c(\lambda) + \lambda^2$.

\begin{remark} When $\lambda(x,v) = f(x)$, and so the flow is purely magnetic, this expression reduces to the \emph{magnetic curvature} of \cite{burns-1991-} if $c=0$. \end{remark}

\begin{lemma}[Pestov identity] For a twice differentiable function $u: SM \to \RR$, it is always true that \begin{equation} \label{EQNPestov} \begin{array}{c} \hspace{20mm} 2H_c u \cdot VFu = (Fu)^2 + (H_c u)^2 - (F(c) + c^2 + \Kmag)(V u)^2 \\ \vspace{-3mm} \\ \hspace{-20mm} \parbox[r]{37mm}{ where these terms will \newline be made to vanish later after integration. } \left\{ \hspace{5mm} \begin{array}{l} + F(H_c u \cdot V u) + V(\lambda) H_c u \cdot V u \\ - H_c(F u \cdot V u) - V(c) F u \cdot V u \\ + V(H_c u \cdot F u), \end{array} \right. \end{array} \end{equation} \end{lemma}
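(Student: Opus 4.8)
The plan is to establish the Pestov identity by a direct computation of commutators, working purely algebraically with the structure equations \eqref{EQNbasiccomms} transported to the modified frame $\{F,H_c,V\}$. First I would record how the brackets behave after the substitution \eqref{EQNadjustedbasis}: since $F = X+\lambda V$ and $H_c = H+cV$, using $[V,X]=H$, $[H,V]=X$, $[X,H]=KV$ together with the Leibniz rule one computes $[V,F]$, $[F,V]$, $[H_c,V]$, $[V,H_c]$ and $[F,H_c]$, each of which comes out as an explicit combination of $F$, $H_c$, $V$ with coefficients involving $\lambda$, $c$ and their derivatives $V(\lambda)$, $V(c)$, $F(c)$, $H_c(\lambda)$, and the Gaussian curvature $K$. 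The term $H_c(\lambda)$ and $\lambda^2$ are exactly what get packaged into $\Kmag = K - H_c(\lambda) + \lambda^2$, which is why that quantity is singled out just before the lemma.

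The core of the argument is then to expand the left-hand side $2H_c u\cdot VFu$ by writing $VFu = FVu + [V,F]u$ and substituting the bracket formula from the previous step; this introduces $(H_c u)(FVu)$ and lower-order pieces. Next I would rewrite $(H_c u)(FVu)$ by "integrating by parts" at the level of vector fields, i.e. using $A(u)\cdot B(v) = A(u\cdot Bv) - \ldots$ type identities — more precisely, moving derivatives around via $F(H_c u\cdot Vu)$, $H_c(Fu\cdot Vu)$ and $V(H_c u\cdot Fu)$, which are precisely the three "total-derivative" terms displayed in the bracketed part of \eqref{EQNPestov}. Each time a derivative is moved past a product, a bracket term $[F,H_c]$, $[F,V]$ or $[H_c,V]$ is generated, and it is the appearance of $[F,H_c]$ acting on $u$ times $Vu$ that produces the $\Kmag (Vu)^2$ term (the curvature lands on the $V$-direction), while the first-order coefficients of the brackets produce the $V(\lambda)H_c u\cdot Vu$, $V(c)Fu\cdot Vu$ and $F(c)(Vu)^2$, $c^2(Vu)^2$ terms. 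Collecting everything, the symmetric quadratic form $(Fu)^2+(H_c u)^2$ falls out of the cross terms, and the identity closes.

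The step I expect to be the main obstacle is bookkeeping rather than conceptual: getting every commutator in the modified frame correct, with the right signs and the right placement of $V(\lambda)$, $V(c)$, $F(c)$ versus $H_c(\lambda)$, and then verifying that after the integration-by-parts manipulations absolutely nothing is left over beyond the listed terms. In particular one must be careful that $V^2 u$ does \emph{not} appear (it should cancel, since the frame equations $[H,V]=X$, $[V,X]=H$ mean second $V$-derivatives enter only through first-order bracket coefficients), and that the asymmetry between $F$ and $H_c$ — $F$ carries $\lambda$ and is the flow direction, $H_c$ carries $c$ — is tracked consistently. A sensible sanity check along the way is to set $\lambda = c = 0$, in which case $F=X$, $H_c=H$, $\Kmag = K$, and \eqref{EQNPestov} must reduce to the classical Pestov identity for the geodesic flow from \cite{sharafutdinov-2000-}; and to set $\lambda = f(x)$, $c=0$, recovering the magnetic Pestov identity with the magnetic curvature noted in the remark. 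Once the frame brackets are nailed down, the remaining computation is the routine expansion indicated above.
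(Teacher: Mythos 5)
Your plan is correct and follows essentially the same route as the paper: derive the commutation relations of the modified frame $\{F,H_c,V\}$ from (\ref{EQNbasiccomms}) and (\ref{EQNadjustedbasis}), then expand $2H_cu\cdot VFu$ by commuting $V$ past $F$ and $H_c$ and regrouping into the total-derivative terms $F(H_cu\cdot Vu)$, $H_c(Fu\cdot Vu)$, $V(H_cu\cdot Fu)$, with the $[F,H_c]$ bracket supplying the $(F(c)+c^2+\Kmag)(Vu)^2$ term — exactly the computation the paper relegates to its Appendix. Carrying out the bookkeeping you describe (with the sanity checks $\lambda=c=0$ and $\lambda=f$, $c=0$) completes the proof.
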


\begin{remark} This lemma was given in \cite{sharafutdinov-2000-} for the geodesic flow, $\lambda = 0$. Without the modified horizontal derivative consideration, so that $c=0$, a version of this lemma was given in \cite{dairbekov-2007a}. \end{remark}

\begin{proof} Combining equations (\ref{EQNbasiccomms}) and (\ref{EQNadjustedbasis}) we see that, with respect to the modified basis, the commutation relations are \begin{displaymath} [V, F] = H_c + (V(\lambda) - c) V, \end{displaymath} \begin{displaymath} [V, H_c] = -F +(V(c) + \lambda) V, \end{displaymath} \begin{displaymath} [F, H_c] = -\lambda F - c H_c + (F(c) + c^2 + \Kmag )V. \end{displaymath}

\comment{A proof can be derived from an application of these relations. There is no difficult step once we know of the existence of such an identity, and for this reason, the explicit algebra is relegated to the Appendix.}A proof can be derived from an application of these relations. There is no difficult step once we know of the existence of such an identity, so the explicit algebra has been relegated to the Appendix. \end{proof}

On $SM$ we consider the standard Liouville volume form with associated Liouville measure $\mu$.
Integrating the Pestov identity (\ref{EQNPestov}) over $SM$ with respect to $\mu$, yields the key equality. The last five terms disappear because each flow induced by the unperturbed vector fields, $X$, $H$ and $V$, leaves the volume form invariant. Thus for any function $f$ on $SM$ we have $\int X(f) d\mu = \int f \mathcal{L}_X d\mu = 0$, and similarly for $H$ and $V$.

This argument immediately implies the integral of the last term in (\ref{EQNPestov}) will be zero. \comment{Integration by parts following the above argument can be used on the other terms.}Integration by parts after applying the above argument can be used on the other terms. Using $F = X + \lambda V,$ for instance, \begin{displaymath} \int_{SM} F(H_c u \cdot Vu) d\mu = 0 + \int_{SM} \lambda V(H_c u \cdot Vu) d \mu = - \int_{SM} V(\lambda) H_c u \cdot Vu d \mu. \end{displaymath}

Explicitly then, the key integral identity derived for use in Section \ref{SECTsplitting} is  \begin{equation} \label{EQNPestovInt} 2\int_{SM} H_cu\cdot VFu \, d\mu = \NormSq{Fu}{L^2} + \NormSq{H_cu}{L^2} - \int_{SM} (F(c) + c^2 + \Kmag) (Vu)^2 \, d\mu. \end{equation}

Now we can follow the exposition in \cite{dairbekov-2003-}, where a function $c:SM \to \RR$ was chosen so as to make the contribution of the last term in the above expression arbitrarily small.  This was equivalent to using an \emph{approximate} solution of the generalised Riccati equation.  When the base manifold is a surface one can use an \emph{exact} solution. In the next section we outline how and why this can be achieved.

\section{Anosov splittings and the Riccati equation} \label{SECTjacobi}

In this section $\phi$ is the thermostat flow in the sphere bundle. We consider variations of a path $\gamma(t) = \pi \circ \phi_t (x,v)$, for some $(x,v) \in SM$. Notice $\gamma$ is defined on $M$, rather than on $SM$ as elsewhere in the paper. We let $f(s,t) = \pi(\phi_{t}(Z(s)))$ be a variation of $\gamma$; $f$ depends on a choice of curve in $TM$ called $Z$, with $\dot{Z}(0) = \xi \in T_{(x,v)}SM$.  The image of $f$ is a small rectangle covering $\gamma$. We define the \emph{Jacobi fields} (dependent on $\xi$) as $J_\xi = \partial f/ \partial s (0, \cdot)$. Then 
\begin{equation} \label{EQNjacobi} \ddot{J_\xi} + R(\dot{\gamma}, J_\xi) \dot{\gamma} - d\lambda (J_\xi, \dot{J}_\xi) \,i \dot{\gamma} - \lambda i \dot{J_\xi} = 0. \end{equation} 
The analysis is relegated to Subsection \ref{SECTthermojacobi} in the Appendix.

The importance of Jacobi fields for this paper is contained in the next Lemma.  We will need a few basic considerations of the tangent bundle, see \cite{paternain-1999-}.

On any manifold there is always a canonical \emph{vertical bundle} $\V(x,v) := \ker(d\pi_{(x,v)})$ over the tangent bundle. The choice of a \emph{horizontal bundle} $\H(x,v)$ - in order that $\H \oplus \V = TTM$ -  corresponds to a choice of connection on $TM$.

We use the Levi-Civita connection to define $\K:TTM \to TM$, the \emph{connection map}.  This will induce a choice of $\H$.  Given $\xi \in T_{(x,v)} TM$, choose a path $Z(t) = (\alpha(t), z(t)) \subset TM$ adapted to $\xi$; this means $\alpha(0) = x$, $z(0) = v$ and $\dot{Z}(0) = \xi$.  Then \begin{equation} \label{EQNconnectionmap} \K(\xi) := (\nabla_{\dot{\alpha}}z)(0). \end{equation}

\comment{We set $\H(x,v) := \ker(\K_{(x,v)})$. For the unperturbed vectors from the last section, it is an easy exercise to check that $V \in \V$ while $X, H \in \H$, and that \begin{equations} T_{(x,v)} TM &\to& T_xM \oplus T_xM \\ \xi &\mapsto& (d\pi_{(x,v)}(\xi), \K_{(x,v)}(\xi)) \end{equations} is an isomorphism. Throughout this section we use this splitting.}

We set $\H(x,v) := \ker(\K_{(x,v)})$. It is an easy exercise to check that \begin{equations} T_{(x,v)} TM &\to& T_xM \oplus T_xM \\ \xi &\mapsto& (d\pi_{(x,v)}(\xi), \K_{(x,v)}(\xi)) \end{equations} is an isomorphism; throughout this section we use this splitting. It is also immediate that for the unperturbed vectors from the last section $V \in \V$ while $X, H \in \H$.  

\begin{lemma} \label{THMflowandJ} If we choose the curve $Z \subset TM$ adapted to $\xi$, such that $\dot{Z}(0) = \xi$, and let $J_\xi$ be the Jacobi field varying with $Z$, then \begin{displaymath} d \phi_t(\xi) = (J_\xi(t), \dot{J}_\xi(t)). \end{displaymath} \end{lemma}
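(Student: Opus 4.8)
The plan is to identify the tangent vector $\xi \in T_{(x,v)}SM$ with the pair $(d\pi_{(x,v)}(\xi), \K_{(x,v)}(\xi)) = (J_\xi(0), \dot{J}_\xi(0))$ under the splitting established just above, and then to show that the flow $\phi_t$ transports this data exactly as the Jacobi equation \eqref{EQNjacobi} dictates. First I would check the initial condition: at $t=0$ we have $f(s,0) = \pi(Z(s))$, so $J_\xi(0) = \partial f/\partial s(0,0) = d\pi(\dot Z(0)) = d\pi(\xi)$; and differentiating, $\dot J_\xi(0) = (\nabla_{\dot\gamma} J_\xi)(0)$, which by the definition \eqref{EQNconnectionmap} of the connection map applied to the curve $Z$ is exactly $\K(\xi)$. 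Hence $(J_\xi(0),\dot J_\xi(0))$ corresponds to $\xi = d\phi_0(\xi)$, matching the claimed identity at $t=0$.

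Next I would show that the curve $t \mapsto (J_\xi(t), \dot J_\xi(t))$ satisfies the same linear ODE as $t \mapsto d\phi_t(\xi)$, so that by uniqueness of solutions the two agree for all $t$. On one side, $d\phi_t(\xi)$ solves the linearized flow equation, i.e. the variational equation of the vector field $F$ along the orbit; concretely, writing a nearby orbit $\pi(\phi_t(Z(s)))$ and differentiating the equation of motion $D\dot\gamma/dt = \lambda\, i\dot\gamma$ in $s$ at $s=0$ produces precisely the thermostat Jacobi equation \eqref{EQNjacobi}. On the other side, $J_\xi$ is by construction the $s$-derivative of that same family of orbits, so it solves \eqref{EQNjacobi} with the initial data computed above. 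Since \eqref{EQNjacobi} is a second-order linear ODE in the pair $(J, \dot J)$ along $\gamma$ (equivalently a first-order linear system on $T_\gamma M \oplus T_\gamma M$), a solution is determined by its value at $t=0$, and both curves have the same value there; therefore $d\phi_t(\xi) = (J_\xi(t), \dot J_\xi(t))$ for all $t$.

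The one genuine point requiring care — and the main obstacle — is the bookkeeping that relates the abstract derivative $d\phi_t(\xi)$ to the concrete variation field $J_\xi$ under the identification $TTM \cong TM \oplus TM$: one must verify that $d\pi \circ d\phi_t(\xi) = J_\xi(t)$ and $\K \circ d\phi_t(\xi) = \dot J_\xi(t)$ as stated, which amounts to commuting $\partial/\partial s$ with $\pi$ and with covariant differentiation along the $t$-curves and checking the relevant symmetry of mixed covariant derivatives (the $\nabla_{\partial_s}\nabla_{\partial_t} - \nabla_{\partial_t}\nabla_{\partial_s} = R(\partial_s f, \partial_t f)$ identity, which is exactly what produces the curvature and $d\lambda$ terms in \eqref{EQNjacobi} in the first place). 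Since the derivation of \eqref{EQNjacobi} itself is already deferred to Subsection \ref{SECTthermojacobi} of the Appendix, the cleanest route is to note there that the computation which produces \eqref{EQNjacobi} is literally the computation of the linearized flow written in the $(J,\dot J)$ coordinates, so the Lemma is a by-product of that derivation rather than a separate argument; I would therefore simply record the Lemma here and point to the Appendix, where the two computations are unified.
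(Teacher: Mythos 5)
There is a genuine gap. Your headline argument (same linear ODE plus same initial data plus uniqueness) is circular as stated: to apply uniqueness you must first know that the split components $\bigl(d\pi\circ d\phi_t(\xi),\,\K\circ d\phi_t(\xi)\bigr)$ satisfy the first-order system associated to \eqref{EQNjacobi}, and the very first equation of that system, $\tfrac{D}{dt}\,d\pi\circ d\phi_t(\xi)=\K\circ d\phi_t(\xi)$, is exactly the nontrivial content of the Lemma. Differentiating the equation of motion in $s$, as you propose, yields an equation for the variation field $J_\xi=\partial f/\partial s(0,\cdot)$, not for the components of $d\phi_t(\xi)$; the link between the two is what has to be proved. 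You recognize this in your last paragraph, but then you defer it to the Appendix, and the Appendix does not supply it: Subsection \ref{SECTthermojacobi} only derives \eqref{EQNjacobi} for $J_\xi$ and never mentions $\K$ or $d\phi_t$. So the crucial identity $\K\circ d\phi_t(\xi)=\dot J_\xi(t)$ is asserted and postponed but never established anywhere in your proposal.

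The missing step is short and is the paper's entire proof, valid for all $t$ at once with no initial-value argument: by the chain rule, $d\pi\circ d\phi_t(\xi)=\tfrac{d}{ds}\bigl(\pi\circ\phi_t\circ Z(s)\bigr)\big|_{s=0}=J_\xi(t)$; and since $s\mapsto\phi_t(Z(s))$ is a curve in $TM$ adapted to $d\phi_t(\xi)$ whose vector part is $\partial f/\partial t(s,t)$, the definition \eqref{EQNconnectionmap} gives $\K\circ d\phi_t(\xi)=\tfrac{D}{ds}\tfrac{\partial f}{\partial t}\big|_{s=0}=\tfrac{D}{dt}\tfrac{\partial f}{\partial s}\big|_{s=0}=\dot J_\xi(t)$. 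Note also that the tool needed here is the first-order symmetry $\tfrac{D}{ds}\tfrac{\partial f}{\partial t}=\tfrac{D}{dt}\tfrac{\partial f}{\partial s}$ (torsion-freeness of the Levi-Civita connection), not the curvature commutation identity you cite; the curvature term enters only in deriving \eqref{EQNjacobi}, which the Lemma itself does not use. Your computation of the initial data at $t=0$ is fine, but once the displayed identities are proved there is nothing left for an ODE-uniqueness argument to do.
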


\begin{proof} \begin{equations} d\pi \circ d\phi_{t} (\xi) &=& d(\pi \circ \phi_t) (\xi) = \frac{d}{ds}(\pi \circ \phi_t \circ Z(s)) = J_\xi(t) \\ \K \circ d\phi_t (\xi) &=& \K \frac{d}{ds}\Big(\phi_t \circ Z(s)\Big)\Big|_{s=0} = \nabla_{d/ds(\pi \circ \phi_t \circ Z(s))} \dot{\gamma_s}|_{s=0} \\ &=& \frac{D}{ds} \frac{Df}{dt}\Big|_{s=0} = \frac{D}{dt} \frac{Df}{ds}\Big|_{s=0} = \dot{J}_\xi(t). \end{equations} \end{proof}

By definition, for an Anosov flow $\phi_t: SM \to SM$ there exists a strong stable bundle $E^- \subset T(SM)$ and a corresponding constant $0< \mu_s < 1$ such that \begin{displaymath} \| d \phi_t (\xi) \| \leq \| \xi \| e^{\mu_s t}, \qquad \forall \xi \in E^s, \, \forall t >0. \end{displaymath}  Similarly, there is a strong unstable bundle $E^+$, and $T(SM) = E^- \oplus E^0 \oplus E^+$ is a continuous splitting, where $E^0$ is generated by the flow direction. By a result of Hirsch, Shub and Pugh, \cite{hirsch-1970-}, \comment{the two weak bundles are of class $C^1$.}the two weak bundles, $E^0 \oplus E^-$ and $E^0 \oplus E^+$, are of class $C^1$. It is these weak bundles that we use in the construction.

Pick a weak stable or weak unstable bundle and denote it $E$.  We know that $E$ is invariant under the flow.  By the work of Dairbekov and the second author in \cite{dairbekov-2007a}, $E$ is always transverse to $\V$ for a thermostat flow.  Hence there exists a pointwise defined linear map $S_{(x,v)}: \H \to \V$ such that \begin{displaymath} E_{(x,v)} = \mathrm{graph}(S_{(x,v)}) = (u, S_{(x,v)}(u)) \subset \H \oplus \V \cong T_xM \oplus T_xM. \end{displaymath} Using the identification above, we view $S_{(x,v)}$ as a map $T_xM \to T_xM$. 

Notice that as $E$ is a weak bundle, certainly it contains the vector field induced by the flow; $E \ni F = X + \lambda V$. Thus $S(d\pi(X)) = \K(\lambda V)$, i.e. $S_{(x,v)}(v) = \lambda iv$. \comment{Define a new function on the sphere bundle complementing $\lambda$,}We define a new function on the sphere bundle which together with $\lambda$ determines the weak bundle, \begin{displaymath} c(x,v):= \metric{S_{(x,v)}(iv)}{iv}. \end{displaymath}

\begin{remark}  Of course, this actually defines two functions; one for the weak stable bundle, and one for the weak unstable bundle.  There should be no confusion as the arguments follow identically, and the brevity helps give a clear account. \end{remark}

If we take $\xi$ in a weak bundle $E$, then $d\phi_t(\xi) \in E \,\, \forall t$, as $E$ is flow invariant.  Since $E$ can be represented as the graph of the bundle map $S$ and $d \phi_t(\xi) = (J_\xi(t), \dot{J}_\xi(t)) \subset E$ by Lemma \ref{THMflowandJ}, we have \begin{displaymath} SJ_\xi = \dot{J}_\xi, \, \Rightarrow \, \ddot{J}_\xi = \dot{S} J_\xi + S^2 J_\xi. \end{displaymath}

\comment{We now proceed as usual, see for instance \cite{paternain-1996-}, and from (\ref{EQNjacobi}) obtain (see Appendix) \begin{equation} \label{EQNriccati} F(c) + \lambda^2 + c^2 + K - H_c(\lambda) = F(c) + c^2 + \Kmag = 0. \end{equation}}

We now proceed as usual \cite{paternain-1996-} and from (\ref{EQNjacobi}) obtain \begin{equation} \label{EQNriccati} F(c) + \lambda^2 + c^2 + K - H_c(\lambda) = F(c) + c^2 + \Kmag = 0. \end{equation} The explicit calculation is done in the Appendix.

\begin{remark} Since the weak bundles are $C^1$, \cite{hirsch-1970-}, we do not need to approximate $c$ in the derivation of the Pestov identity, as was required in \cite{dairbekov-2003-}.  This means that the Pestov identity on surfaces greatly simplifies when modified to any Anosov flow; see the next Lemma.  In general, this will not hold in higher dimensions. \end{remark}

\begin{lemma} \label{THMpestov} Let $u:SM \to \RR$ be any function.  From the discussion in this Section we can pick $c:SM \to \RR$, corresponding to either the weak unstable bundle or the weak stable bundle, such that \begin{displaymath} 2\int_{SM} H_cu\cdot VFu \, d\mu = \NormSq{Fu}{L^2} + \NormSq{H_cu}{L^2}. \qedhere \end{displaymath} \end{lemma}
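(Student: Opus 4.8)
The plan is to start from the integrated Pestov identity \eqref{EQNPestovInt}, which holds for any twice differentiable $u:SM\to\RR$ and any choice of function $c$, and simply substitute the function $c$ built in this section from a weak bundle. By Lemma \ref{THMflowandJ} together with the flow invariance of the weak bundle, the bundle map $S$ satisfies the Riccati equation, and this is exactly what produces the relation \eqref{EQNriccati}, namely $F(c)+c^2+\Kmag = 0$ pointwise on $SM$. Therefore the last term on the right-hand side of \eqref{EQNPestovInt},
\begin{displaymath}
\int_{SM} (F(c) + c^2 + \Kmag)(Vu)^2 \, d\mu,
\end{displaymath}
has an integrand that vanishes identically, so the whole term drops out, leaving precisely the asserted identity
\begin{displaymath}
2\int_{SM} H_cu\cdot VFu \, d\mu = \NormSq{Fu}{L^2} + \NormSq{H_cu}{L^2}.
\end{displaymath}

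The one genuine subtlety — and the step I would be most careful about — is a regularity issue: the Pestov identity \eqref{EQNPestov} was derived for a twice differentiable $u$, but it also implicitly involves derivatives of $c$ through the $F(c)$ and $V(c)$ terms, and the weak bundles (hence $c$) are only of class $C^1$ by Hirsch–Pugh–Shub. So strictly speaking one should first check that \eqref{EQNPestovInt} still makes sense and holds with a merely $C^1$ function $c$. This is handled by the integration-by-parts argument already given after \eqref{EQNPestov}: every occurrence of a derivative of $c$ in the pointwise identity is paired, after integrating over $SM$ against the Liouville measure, with the invariance of $\mu$ under $X$, $H$, $V$, so those terms can be moved onto the (smooth) function $u$ or cancelled. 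Once the identity is written in the integrated form \eqref{EQNPestovInt}, the only surviving appearance of $c$ is inside the combination $F(c)+c^2+\Kmag$, which by \eqref{EQNriccati} is zero as a continuous function, so no further differentiability of $c$ is needed and the substitution is legitimate.

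Concretely I would organise the proof as: (i) recall that \eqref{EQNPestovInt} is valid for twice differentiable $u$ and an arbitrary admissible $c$; (ii) take $c$ to be the function associated with the chosen weak (stable or unstable) bundle as constructed above, noting it is $C^1$ and that the integrated form only involves $c$ undifferentiated apart from within $F(c)+c^2+\Kmag$; (iii) invoke the Riccati equation \eqref{EQNriccati} to conclude $F(c)+c^2+\Kmag\equiv 0$; (iv) substitute into \eqref{EQNPestovInt} to kill the last term and read off the claimed equality. The remark preceding the lemma already flags that the $C^1$ regularity of the weak bundles is exactly what removes the need for the approximation scheme of \cite{dairbekov-2003-}, so the argument is short; the main obstacle is really just bookkeeping the regularity of $c$ and being sure the integrated identity is insensitive to it.
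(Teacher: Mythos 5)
Your proposal is correct and follows essentially the same route as the paper: the lemma is obtained by taking $c$ to be the $C^1$ function associated with a weak (un)stable bundle, invoking the Riccati equation \eqref{EQNriccati} so that $F(c)+c^2+\Kmag\equiv 0$, and substituting into the integrated Pestov identity \eqref{EQNPestovInt} to remove the last term. Your regularity remarks match the paper's observation that the $C^1$ regularity of the weak bundles (Hirsch--Pugh--Shub) is precisely what makes the exact, rather than approximate, solution of the Riccati equation usable here.
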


\section{The energy estimates method} \label{SECTsplitting}

Throughout, $u$ and $p$ are functions on the sphere bundle.  In \cite{croke-1998-}, Croke and Sharafutdinov studied the geodesic cohomological equation, \begin{displaymath} Xu = p, \end{displaymath}on a compact negatively curved manifold. For $p$ a symmetric tensor function of the form $p(x,v) = p_{i_1 \ldots i_m}(x) v^{i_1} \cdots v^{i_m},$ it was shown that $p$ must be \emph{potential}.
In that paper, a potential function was of the form $p = \id v$, where $\id = d^s = \sigma \circ \nabla$ was covariant differentiation of a tensor followed by symmetrisation of the indices. We now outline why we cannot expect such a simple constraint for solutions of the thermostat cohomological equation \begin{displaymath} Fu = p, \end{displaymath} with $F = X + \lambda V,$ as in (\ref{EQNnewton}).
We can uniquely split a function $p$ on $SM$ into its \emph{even} and \emph{odd} parts; \begin{equations} 2p^e(x,v) &=& p(x,v) + p(x,-v) \\ 2p^o(x,v) &=& p(x,v) - p(x,-v), \end{equations}such that $p = p^e + p^o$. For the general thermostats in this section, $\lambda$ has both even and odd components. By considering the even and odd parts of the thermostat cohomological equation we immediately realise the extra $\lambda V$ term requires a more involved definition of the operator $\id$. Similarly, we should not restrict our attention just to symmetric tensor fields of some fixed degree.

We are interested in the X-ray transform $I$ 
applied to functions $p$ of the form $p(x,v) = q_x(v,v) + \sigma_{x}(v)$, where $q$ is a symmetric 2-tensor, $\sigma$ is a 1-form on $M$. If $D$ is a notation for a function space ($C^k$, $L^p$, $H^k$, etc.), then we will denote by $\mathbf D(M)$ the corresponding space of pairs $\mathbf f=[q,\sigma]$, with $q$ a symmetric covariant $2$-tensor and $\sigma$ a $1$-form, and denote by $\mathcal D(M)$ the corresponding space of pairs $\mathbf{w} = [\psi,h]$, with $\psi$ a $1$-form and $h$ a function on $M$. In particular, $\mathbf{L}^2(M)$ is the space of square integrable pairs $\mathbf f=[q,\sigma]$, and we endow this space with the norm \begin{equation}\label{norm} \|\mathbf f\|^2 =\int_M\left\{|q|^2+|\sigma|^2\right\}\,d\mbox{\rm vol}, \end{equation} with the corresponding inner product. In the space $\mathcal L^2(M)$ we will consider the norm \begin{equation}  \label{S7} \|\mathbf{w}\|^2 = \int_M \left(|\psi|^2 + h^2 \right)\,d\mbox{\rm vol}. \end{equation}

Clearly, the norm of a pair $[q,\sigma]$ in $\mathbf{L}^2(M)$ is equivalent to the norm of the corresponding function $p(x,v)$ in $L^2(SM)$. We wish to construct an operator $\id$ such that $I(\id [\psi,h])=0$ for all $[\psi,h]\in \mathcal{H}^1(M)$. For this it is enough to have $\id [\psi,h]=[q,\sigma]$, where $q+\sigma= F(h+\psi)$.



We use the hypothesis $\lambda(x,v) = f(x) + \metric{\e}{iv}$; earlier in the paper $\lambda$ could have been any once differentiable function on $SM$.
Set $\theta_{x}(v):=\metric{\e(x)}{v}$, so $\lambda=f+V(\theta)$.

We compute
$(X + \lambda V)(\psi + h) = d^s \psi + dh + f \cdot V(\psi) + V(\theta) \cdot V(\psi) + 0$.
Projecting onto the odd and even parts, this is rewritten as \begin{displaymath} \id \left( \begin{matrix} \psi \\ h \end{matrix} \right) = \left( \begin{matrix} d^s \psi + V(\theta) \cdot V \psi \\ dh + f \cdot V \psi \end{matrix} \right) = \left( \begin{matrix} d^s + V(\theta) \cdot V  & 0 \\ f \cdot V & d \cdot V \end{matrix} \right) \left( \begin{matrix} \psi \\ h \end{matrix} \right). \end{displaymath}
Hence $\id$ is the operator
\[\id = \left( \begin{matrix} d^s + V(\theta) \cdot V & 0\\ f \cdot V & d\cdot V \end{matrix} \right).\]

\begin{definition} We call a pair $[q,\sigma]\in \mathbf{L}^2(M)$ {\em potential}
if the equations $\id [\psi,h]=[q,\sigma]$ hold with $[\psi,h]\in \mathcal H^1(M)$. A pair $[q,\sigma]$ is called {\em solenoidal} if it is
$L^2$-orthogonal to all potential pairs.
\end{definition}

We let $\{\, \cdot \, \}^s$ denote symmetrisation. If a pair $[q,\sigma]$ is orthogonal to all potential pairs, then for all $\psi$ and $h$ we have:

\[\int_{M}(q,d^{s}\psi+\{V(\theta)\cdot V(\psi))\}^s+(\sigma,f\cdot V(\psi)+dh) \,d\mbox{\rm vol}=0.\]
Then
\[-\int_{M}(\delta q+V(\iota_{i\e}q)+f\cdot V(\sigma),\psi)+(\delta\sigma,h)\,d\mbox{\rm vol}=0,\]
where $\delta$ is divergence and $\iota_{i\e}q$ is the 1-form given by
$(\iota_{i\e}q)_{x}(v)=q_{x}(i\e(x),v)$.  Therefore,

\begin{equations} \delta q + V(\iota _{i\e} q) + f \cdot V(\sigma) &=& 0, \\ \delta \sigma &=&0.\end{equations}
Thus $[q,\sigma]$ is solenoidal if $\di [q,\sigma] = 0$, where \begin{displaymath} \di = \left( \begin{matrix} \delta+V \circ \iota_{i\e} & f\cdot V \\ 0  & \delta \end{matrix} \right) \end{displaymath} and $\id=-\di^{*}$.
The operator $-\di\id$ acting on pairs $\mathbf{w}=[\psi,h]$ is elliptic (the proof is the same as in
\cite[Section 3.3]{dairbekov-2006-} for the pure magnetic case)
and if the thermostat $\phi$ is transitive, its kernel is given by the pairs $[0,h]$ where $h$ is a constant. Indeed the kernel is given by those smooth
$\mathbf{w}$ 
such that 
$\id\mathbf{w}=0$, which means that $h+\psi$ is a first
integral of the thermostat. Assuming transitivity we must have $\psi=0$ and
$h$ constant.

Suppose now that $\mathbf{f}=[q,\sigma]$ is given. By solving the
equation $\di\id \mathbf{w}=\di \mathbf{f}$ for $\mathbf{w}$ and setting
$\mathbf{f}^s:=\mathbf{f}-\id \mathbf{w}$ we see that we can decompose
$\mathbf{f}$ as
\begin{equation} \label{EQNpssplitting}\eo{f} = \id \eo{w} + \eo{f}^s, \qquad \di \eo{f}^s =0 \end{equation}
where $\mathbf{f}^s$ is uniquely determined and $\mathbf{w}$ is uniquely
determined up to pairs of the form $[0,h]$ with $h$ constant.
If $\mathbf{f}$ is smooth, then $\mathbf{f}^s$ and $\mathbf{w}$
 are also smooth.
Finally we note that an Anosov thermostat on a surface is always
transitive by a result of E. Ghys \cite{ghys-1984-}.

To simplify notation, we let $|u|$ denote the $L^2$ norm of $u$, and $\|u\|$
its $H^1$ norm.

\begin{proposition} \label{THMkeyinequality} If $u \in C^{\infty}(SM) $ satisfies the cohomological equation \begin{displaymath} p = Fu, \end{displaymath} with $p(x,v) = q_x(v,v) + \sigma_{x}(v)$, then there exists a constant $C$ such that \begin{displaymath} \|u\|^2 \leq C \Big( |u|^2 + |\di [q,\sigma]| \cdot |u|\Big). \end{displaymath} \end{proposition}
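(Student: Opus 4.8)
The plan is to exploit the Pestov identity in the sharpened form of Lemma~\ref{THMpestov}. Write $p = Fu$ with $p = q_x(v,v) + \sigma_x(v)$, so $p$ has degree at most two in the velocities. The first step is to analyse the right-hand side of Lemma~\ref{THMpestov}: since $VFu = Vp$ and $p$ is a quadratic polynomial in $v$, the function $Vp$ lies in a fixed finite-dimensional space of spherical harmonics (degrees $1$ and $2$ on each fibre), and we can control $|VFu| = |Vp|$ directly by $\|[q,\sigma]\|$ in $L^2$. Hence the left-hand side $2\int_{SM} H_c u \cdot VFu\,d\mu$ is bounded by $\varepsilon \|H_c u\|_{L^2}^2 + C_\varepsilon |Vp|^2$, and choosing $\varepsilon$ small we absorb the $\|H_c u\|^2$ term into the right-hand side of the Pestov identity. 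This yields an a priori bound
\[
\NormSq{Fu}{L^2} + \NormSq{H_c u}{L^2} \leq C\, \|[q,\sigma]\|_{L^2}^2 .
\]
Next I would use $\|[q,\sigma]\|_{L^2}$ is equivalent to $|p|_{L^2} = |Fu|_{L^2}$ together with the splitting $\eo{f} = \id\eo{w} + \eo{f}^s$ from~(\ref{EQNpssplitting}): since $\id\eo{w} = F(h+\psi)$ is a coboundary, one checks that $|p|_{L^2}$ itself is controlled by the norm of the solenoidal part plus lower-order terms, and the solenoidal part is what $\di[q,\sigma]$ measures. More carefully, I expect $|Fu|_{L^2}^2 = \int_{SM} p\cdot Fu\,d\mu = \int_{SM} p \cdot (Fu)\,d\mu$, and integrating by parts (using $\div \e = 0$ so that $F$ is volume-preserving up to the $\lambda V$ term, or more precisely using $\int F(\cdot)\,d\mu$ estimates as in Section~\ref{SECTpestov}) one can trade $Fu = p$ against $\di[q,\sigma]$ paired with $u$, producing the cross term $|\di[q,\sigma]|\cdot|u|$ explicitly. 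Thus $|Fu|^2 \leq C(|u|^2 + |\di[q,\sigma]|\cdot|u|)$.

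Finally, to pass from control of $\NormSq{Fu}{L^2} + \NormSq{H_c u}{L^2}$ to control of the full $H^1$ norm $\|u\|^2$, I would use that $\{F, H_c, V\}$ spans $T(SM)$ together with the already-derived bound on $|Vp| = |VFu|$, which via the commutation relation $[V,F] = H_c + (V(\lambda)-c)V$ gives $Fu$ in terms of $V(Fu)$ and lower-order data, and similarly recovers $Vu$ from $VFu$ and $H_c u$ after one more application of the relations; together these control $Vu$, $Fu$, $H_c u$ in $L^2$, hence $\|u\|_{H^1}$ up to a multiple of $|u|_{L^2}$. Combining the three estimates and absorbing constants gives
\[
\|u\|^2 \leq C\big(|u|^2 + |\di[q,\sigma]|\cdot|u|\big),
\]
as claimed.

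The main obstacle I anticipate is the first step: bounding the left-hand side $2\int_{SM} H_c u\cdot VFu\,d\mu$ in a way that lets the $\NormSq{H_c u}{L^2}$ term be absorbed, \emph{uniformly} in the thermostat. The function $c$ is only $C^1$ (weak-bundle regularity from \cite{hirsch-1970-}), so $H_c$ has a genuinely nonsmooth coefficient, and one must be careful that the constant $C$ depends only on the geometry and on $\|c\|_{C^1}$, not on higher derivatives that need not exist. A secondary subtlety is that for general thermostats $\lambda$ has both even and odd parts, so the even/odd decomposition does not neatly separate the equation $Fu = p$ into independent pieces as it would for a pure Gaussian thermostat; one must carry both parity components together and verify that the finite-dimensionality of the space containing $Vp$ is genuinely unaffected. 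Once the a priori $L^2$ bound on $Fu$ and $H_c u$ is in hand, the recovery of the full $H^1$ norm is routine elliptic-type bookkeeping using the commutation relations.
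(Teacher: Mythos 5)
There is a genuine gap, and it sits at the heart of the argument. Your first step applies Cauchy--Schwarz with an $\varepsilon$ directly to the cross term $2\int_{SM}H_cu\cdot Vp\,d\mu$; this only yields $|H_cu|\leq C|p|$, and in doing so it destroys exactly the term from which the quantity $\di[q,\sigma]$ must be extracted. The divergences $\delta q$ and $\delta\sigma$ that make up $\di[q,\sigma]$ can only appear through \emph{horizontal} derivatives of $p$, and the only place these arise is by integrating the Pestov cross term by parts so that $u$ itself (not its derivatives) pairs against $HVp$: one then uses the identities (\ref{EQNdivs}), $X\sigma+HV\sigma=\delta\sigma$ and $Xq+HV(q)/2=\delta q$, together with a second use of the cohomological equation in the form $Xu=p-\lambda Vu$, to convert $\int u\cdot HVp$ into $\int u\cdot(\mbox{components of }\di[q,\sigma])$ plus terms bounded by $|u|\cdot|p|$ and sign-definite terms such as $-2\int p\cdot\sigma-4\int p\cdot q\leq 0$. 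Your proposed substitute -- producing $\di[q,\sigma]$ by integrating $\int p\cdot Fu\,d\mu$ by parts along the flow -- cannot work: integration by parts with respect to $F=X+\lambda V$ never generates the horizontal divergences; carried out honestly it returns the tautology $|p|^2=|p|^2$. So your claimed intermediate bound $|Fu|^2\leq C\big(|u|^2+|\di[q,\sigma]|\cdot|u|\big)$ is unsupported, and with it the whole scheme.

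A second, independent gap is the passage to the full $H^1$ norm. Knowing $|Fu|$, $|H_cu|$ and $|VFu|$ in $L^2$ does not control $|Vu|$: the commutation relation $[V,F]=H_c+(V(\lambda)-c)V$ only expresses $F(Vu)$ in terms of the other data, and there is no way to invert $F$ on $L^2$ without further dynamical input. The mechanism used here is to run the estimate for \emph{both} Riccati solutions $c^s$ and $c^u$ coming from the weak stable and unstable bundles and to use the Anosov transversality $c^s\neq c^u$ everywhere, so that the pointwise linear system expressing $H_{c^s}u$ and $H_{c^u}u$ in terms of $Hu$ and $Vu$ can be inverted with bounded coefficients; this gives $|Hu|^2+|Vu|^2\leq C(|H_{c^s}u|^2+|H_{c^u}u|^2)$, while $|Xu|\leq|p|+C|u|$ handles the remaining direction. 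Finally, the estimate is closed by bounding $|p|=|Fu|\leq C\|u\|$ and solving the resulting quadratic inequality in $\|u\|$; none of these three ingredients (the integration by parts onto $HVp$ with (\ref{EQNdivs}), the two-bundle inversion, the quadratic-inequality step) is present in your sketch. Your observation that $c$ is only $C^1$ is a fair concern, but it is harmless: the argument uses only that $c$ and $Vc$ are bounded.
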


\begin{proof} Substituting into the Pestov integral identity of Lemma \ref{THMpestov} (with $c$ a solution of the Riccati equation as found in Section \ref{SECTjacobi}), we see \begin{displaymath} 2\int_{SM} H_cu \cdot Vp \, d\mu = |p|^2 + |H_c u|^2. \end{displaymath}
The initial steps are algebraic manipulation. All integrals are over $SM$ with respect to the Liouville measure $\mu$.
 \begin{equations} |p|^2  + |H_cu|^2 &=& 2\int H_c u \cdot Vp
\\ &=& 2\int H_c(u \cdot Vp) - 2\int u \cdot H_cV(p)
\\ &=& 2\int cV(u \cdot Vp) - 2\int u \cdot (HVp+ cV^2p)
\\ &=& -2\int u \cdot (Vc \cdot Vp + HVp + cV^2p),
\end{equations} \begin{equation} \label{EQNhcu} \Rightarrow |p|^2 + |H_cu|^2 \leq - 2\int u \cdot \big( \, HVp + Vc\cdot Vp + cV^2p \, \big). \end{equation}
We look to bound each term on the right hand side. Note that since $SM$ is compact, the continuous function $c: SM \to \RR$ is bounded.  Since it is of class $C^1$, $Vc$ is also a bounded function.

We have a prescribed form for $p$, which gives \begin{equation} \label{EQNvp} (Vp)(x,v) = V(q_x(v,v) + \sigma_x(v)) = 2q_x(iv,v) + \sigma_x(iv). \end{equation}  So $|Vp| \leq 2 |p|$, and similarly $|V^2 p| \leq 4|p|$.

Let us bound the integral of $uHVp$.  Using (\ref{EQNvp}) there are two integrals to consider, that of $uHV\sigma$ and that of $uHVq$.  We will need the following easy results (cf. \cite[Lemmata 5.2 and 6.1]{paternain-2007-}); \begin{equation} \label{EQNdivs} X(\sigma) + HV(\sigma) = \delta \sigma, \qquad X(q) + HV(q)/2 = \delta q. \end{equation}

\begin{equations} \int u \cdot HV(\sigma) &=& \int u \cdot \delta \sigma - \int u \cdot X (\sigma)
\\ &=& \int u  \cdot \delta \sigma + \int X(u) \cdot \sigma
\\ &=& \int u \cdot \delta \sigma +\int p \cdot \sigma - \int \lambda Vu \cdot \sigma
\\ &=& \int u \cdot (\delta \sigma + V(\lambda) \sigma + \lambda V(\sigma))+ \int p \cdot \sigma
\\ &=& \int \! u \cdot (\delta \sigma + 2f\cdot V(\sigma)) + \! \int \! u \cdot V(V(\theta) \cdot \sigma - f \cdot \sigma) + \! \int \! p \cdot \sigma. \end{equations} \begin{equations} \int u \cdot HV(q)/2 &=& \int u \cdot \delta q - \int u \cdot X(q)
\\ &=& \int u \cdot \delta q + \int p \cdot q - \int \lambda Vu \cdot q
\\ &=& \! \int \! u \cdot (\delta q + V(\iota_{i\e} q)) + \! \int \! u \cdot (f V(q) - \iota_\e q) + \! \int \! p \cdot q.
\end{equations}

Notice that the first integral in each of the above calculations - switching back to the original coefficients in (\ref{EQNhcu}) - will sum to yield \begin{equation} \label{EQNhalffactor} -2\int \! u \cdot (\delta \sigma + 2 f \cdot V(\sigma)) - 4 \int \! u \cdot (\delta q + V(\iota_{i\e} q)) \,\leq 4|u| \cdot | \di [q,\sigma]|. \end{equation}  The last term in each of the two expressions are also bounded, \begin{displaymath} -2\int p \cdot \sigma - 4 \int p \cdot q = -4 \int \sigma^2/2 + q^2 \leq 0. \end{displaymath} Finally, the middle integral of both expressions is bounded by $C'|u|\cdot |p|$, by the same arguments as when dealing with the operator $V$ in (\ref{EQNvp}).

Feeding this into (\ref{EQNhcu}) we have a bound on the left hand side: \begin{equations} |p|^2 + |H_cu|^2 &\leq& - 2\int u \cdot (Vc \cdot Vp - cV^2p) \,  - 2\int u \cdot HVp
\\ &\leq& C |u| \cdot |p| + 4 |u| \cdot |\di [q,\sigma]| + 0 + C' |u| \cdot |p|.\end{equations} Here $C$ and $C'$ are constants, dependent only on $\lambda$ and $c$.  Hence the constants are determined by the choice of Anosov flow, rather than $u$ and $p$.  Let $C$ denote some generic constant greater than 1.

\begin{displaymath}|p|^2 + |H_cu|^2 \leq  C \Big( |u| \cdot |p| +  |u| \cdot |\di [q,\sigma]| \Big). \end{displaymath}

Let $c^u$ and $c^s$ be the two Riccati solutions from Section \ref{SECTjacobi}, corresponding to the weak unstable and stable bundles of the Anosov flow. Adding their corresponding inequalities when substituted into the last inequality, we have \begin{equation}  \label{EQNhcubound} |u|^2 + |p|^2 + |H_{c^u}u|^2 + |H_{c^s}u|^2 \leq C \Big( |u|^2 + |u| \cdot |p| +  |u| \cdot |\di [q,\sigma]| \Big). \end{equation}
We claim that the first Sobolev norm of $u$ is smaller than the left hand side. Recall that $\|u\|^2 := \|u\|_{H^1}^2 = |u|^2 + |Xu|^2 + |Hu|^2 + |Vu|^2$. Using the definition of $F$ and the cohomological equation, \begin{displaymath} |Xu|^2 \leq |p|^2 + |\lambda Vu|^2 \leq C(|p|^2 + |u|^2). \end{displaymath} Since the flow is Anosov, $c^s \neq c^u$ everywhere, which means the linear system \begin{displaymath} \left( \begin{matrix} H_{c^u} \\ H_{c^s} \end{matrix} \right) = \left( \begin{matrix} 1 & c^u \\ 1 & c^s \end{matrix} \right) \left( \begin{matrix} H \\ V \end{matrix} \right) \end{displaymath} has a unique pointwise solution for $H$ and $V$. Say $H = \alpha H_{c^u} + \beta H_{c^s}$. Then \begin{equations} |Hu|^2 \leq \alpha^2 |H_{c^u}u|^2 + \beta^2 |H_{c^s}u|^2
\leq C\left( |H_{c^u}u|^2 + |H_{c^s}u|^2\right).\end{equations}The bound for $|Vu|^2$ follows in the same way.

Therefore, using (\ref{EQNhcubound}), \begin{equation} \label{EQNalmostdone} \|u\|^2 \leq C \Big( |u| ^2 + |u| \cdot |p| +  |u| \cdot |\di [q,\sigma]| \Big). \end{equation}

Finally, we use the cohomological equation to bound $|p| = |Fu| \leq C \|u\|$ which turns (\ref{EQNalmostdone}) into a quadratic inequality in $\|u\|$.  Solving gives the required bound. \end{proof}

We now have the same bound on the first Sobolev space as in \cite{dairbekov-2003-}, but for Anosov thermostat flows on a surface.  Thus we have the same consequence as their paper.  We quickly outline the proof they gave.

\reiterate{0}{\begin{mythm} For an Anosov thermostat on a surface,
the inclusion $P \subset Z$ has finite codimension. \end{mythm}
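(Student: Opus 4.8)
The plan is to leverage Proposition~\ref{THMkeyinequality} to show that the solenoidal elements of $Z$ form a finite-dimensional space. Recall that by the decomposition in~(\ref{EQNpssplitting}), every pair $\mathbf{f}=[q,\sigma]$ splits uniquely as $\mathbf{f} = \id\mathbf{w} + \mathbf{f}^s$ with $\di\mathbf{f}^s = 0$, and $\id\mathbf{w}$ is a potential pair that automatically lies in $Z$. Hence the codimension of $P$ in $Z$ equals the dimension of the space of \emph{solenoidal} pairs lying in $Z$, i.e. the space $Z^s := \{\mathbf{f}^s : \mathbf{f}\in Z,\ \di\mathbf{f}^s = 0\}$. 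So it suffices to prove that $Z^s$ is finite dimensional.

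First I would fix a solenoidal pair $\mathbf{f}^s = [q,\sigma] \in Z^s$. Since $\mathbf{f}^s \in Z$ we have $I[q,\sigma]=0$, and by the smooth Liv\v{s}ic Theorem~\cite{llave-1986-} there is a smooth $u:SM\to\RR$ with $Fu = p$, where $p(x,v) = q_x(v,v)+\sigma_x(v)$. Now apply Proposition~\ref{THMkeyinequality}: because $[q,\sigma]$ is solenoidal, $\di[q,\sigma]=0$, so the inequality collapses to
\begin{displaymath}
\|u\|^2 \leq C\,|u|^2,
\end{displaymath}
that is, the $H^1$ norm of $u$ is controlled by its $L^2$ norm, with a constant $C$ depending only on the flow. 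The next step is to normalize: we may assume $u$ has zero mean over $SM$ (subtracting a constant from $u$ does not change $Fu$), so $u$ lives in the closed subspace $L^2_0(SM)$ of mean-zero functions.

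The key step is then a standard compactness argument. Consider the linear map $\mathbf{f}^s \mapsto u$, where $u$ is the unique mean-zero solution of $Fu = p$ (uniqueness holds because transitivity of the Anosov thermostat, guaranteed by Ghys~\cite{ghys-1984-}, forces any first integral to be constant, so the kernel of $F$ on $L^2_0$ is trivial; smoothness and well-definedness follow from Liv\v{s}ic). The estimate $\|u\|_{H^1}\leq C|u|_{L^2}$ says precisely that on the image $W$ of this map — a subspace of $H^1(SM)\cap L^2_0(SM)$ — the $H^1$ and $L^2$ norms are equivalent. But the inclusion $H^1(SM)\hookrightarrow L^2(SM)$ is compact (Rellich), so the unit ball of $W$ in the $L^2$ norm is both closed (it is $L^2$-closed inside the $H^1$-closure, using norm equivalence) and precompact; hence $W$ is finite dimensional. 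Since $\mathbf{f}^s = p^e + p^o$ is recovered from $u$ via $p = Fu$ (the map $u\mapsto Fu = p \mapsto [q,\sigma]$ is linear and bounded), the space $Z^s$ is the image of the finite-dimensional space $W$ under a linear map, so $Z^s$ is finite dimensional. This gives the desired finite codimension of $P$ in $Z$.

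The main obstacle, and the point requiring the most care, is making the compactness/uniqueness argument fully rigorous: one must verify that the assignment $[q,\sigma]\mapsto u$ is well-defined and linear (injectivity of $F$ on mean-zero functions for a \emph{transitive} Anosov flow, plus the regularity statement of the smooth Liv\v{s}ic theorem that the transfer function is genuinely $C^\infty$), and that the norm-equivalence estimate really does force finite dimensionality — i.e. that $W$, with the subspace $L^2$ topology, is a Banach space on which a compact operator (the inclusion into $L^2$) restricts to an isomorphism, which is only possible in finite dimensions. A secondary subtlety is bookkeeping the decomposition~(\ref{EQNpssplitting}) at the level of \emph{smooth} pairs so that the identification of $\mathrm{codim}(P\subset Z)$ with $\dim Z^s$ is exact rather than merely an inequality; this is where one uses that $\mathbf{f}$ smooth implies $\mathbf{f}^s$ and $\mathbf{w}$ smooth, already noted in the excerpt. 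None of these steps is deep, but assembling them cleanly — exactly as in~\cite{dairbekov-2003-} — is the substance of the proof.
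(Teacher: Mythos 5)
Your proposal is correct and follows essentially the same route as the paper: decompose into potential plus solenoidal parts, note the solenoidal part of any element of $Z$ lies in $Z$, solve $Fu=p$ by the smooth Liv\v{s}ic theorem, apply Proposition~\ref{THMkeyinequality} with $\di[q,\sigma]=0$ to get $\|u\|_{H^1}\le C|u|_{L^2}$, and conclude finite dimensionality from compactness of $H^1\hookrightarrow L^2$. The paper phrases the last step as a contradiction with an $L^2$-orthonormal sequence of solutions $u_k$ rather than via your norm-equivalent subspace $W$, but this is the same compactness argument in substance.
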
}

\begin{proof}  If the inclusion is not finite dimensional, take a linearly independent sequence of functions $\{z_k\}_{k=1}^\infty \in Z \backslash P$.  Using the potential-solenoidal decomposition (\ref{EQNpssplitting}), for each $k$ there exists functions such that \begin{displaymath} z_k = \id x_k + y_k, \quad \di y_k = 0. \end{displaymath}
But $z_k \in Z \Rightarrow y_k \in Z$, as $\id x_k$ is potential and therefore integrates to zero along all trajectories.  Using the Liv\v sic Theorem \cite{llave-1986-}, we can find another sequence of functions $\{u_k\}_{k=1}^\infty$ with \begin{displaymath} F u_k = y_k. \end{displaymath}  Using linearity and Gram-Schmidt, we can assume $\{u_k\}_{k=1}^\infty$ is an orthonormal set.  The $y_k$ may also need to be adjusted, but by linearity they will still be solenoidal.

Applying the key inequality, Proposition \ref{THMkeyinequality}, on $F u_k = y_k$ the $\di y_k$ term disappears:  \begin{displaymath} \|u_k\| \leq C |u_k| = C, \end{displaymath} as the $u_k$ were normalised.  Therefore in $H^1$ the set $\{u_k\}$ is bounded.  This leads to the required contradiction, as the embedding $H^1 \hookrightarrow L^2$ is compact, which would imply the infinite orthonormal sequence $u_k$ had a convergent subsequence. \end{proof}

\section{Pure thermostats in negative curvature} \label{SECTpurethermostats}

For geodesic flows on surfaces, Sharafutdinov and Uhlmann \cite{sharafutdinov-2000-} found a useful new form of the integrated Pestov identity (\ref{EQNPestovInt}). The main advantage of their modification was that they had effectively `completed the square', controlling the $\metric{Hu}{VXu}_{L^2}$ term. Thus the full power of the identity could once again be applied, as was done in \cite{croke-1998-}.

Here we exhibit a subset of thermostat flows for which the results of \cite{sharafutdinov-2000-} still hold, so that the inclusion $P \subset Z$ is an equality, $P = Z$. We proceed on a negatively curved surface with a pure thermostat flow, such that $\lambda =V(\theta)$. The equation of motion takes the form \begin{displaymath} \frac{D\dot{\gamma}}{dt} = \metric{\dot{\gamma}}{i\e}\,i\gdot \end{displaymath}
and we will assume that $\e$ is divergence free.

\begin{remark} \label{RMKevenforpure} Notice that the odd and even parts of the cohomological equation are uncoupled, unlike in Section \ref{SECTsplitting}; \begin{displaymath} F(u^e) = p^o, \quad F(u^o) = p^e. \end{displaymath} Again, we are considering coboundaries which are up to quadratic in the velocities. However, thermostatic coboundaries of up to linear order were understood in \cite[Theorem B]{dairbekov-2007a}. This leads us to consider coboundaries which are purely even, $p=q$, for some symmetric 2-tensor $q$. \end{remark}

\begin{lemma} \label{THMUhlmannsPestov} Suppose $F(u)=p$, where $u\in C^{\infty}(SM)$, $p$ is a symmetric 2-tensor and $F = X + V(\theta) \cdot V$. Let $\varphi = V^2(u) + u$. Then the integral Pestov identity (\ref{EQNPestovInt}) can be rewritten \begin{equation} \label{EQNUhlmannPestov} |F\varphi|^2 + |H_c \varphi|^2 + |2 \cdot H_c\varphi - (\theta + c) \cdot V\varphi|^2 = \int_{SM} \R_c \cdot (V\varphi)^2 \, d\mu \end{equation} where, as before, $|\cdot|$ denotes the $L^2$ norm and we have a Riccati like function \begin{displaymath} \R_c = F(\theta + c) + (\theta + 2c)(\theta + c) + K. \end{displaymath}  \end{lemma}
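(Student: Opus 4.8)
The plan is to start from the integrated Pestov identity in the form of Lemma~\ref{THMpestov}, applied not to $u$ but to the auxiliary function $\varphi = V^2 u + u$, and then to reorganise the resulting expression by completing the square in the cross term $\int_{SM} H_c\varphi \cdot VF\varphi\,d\mu$, exactly as Sharafutdinov and Uhlmann do for the geodesic flow in \cite{sharafutdinov-2000-}. First I would record how $F$, $H_c$ and $V$ interact with the operator $\mathrm{id}+V^2$: using the commutation relations $[V,F]=H_c+(V(\lambda)-c)V$ and $[V,H_c]=-F+(V(c)+\lambda)V$ from the proof of the Pestov lemma, and the fact that here $\lambda=V(\theta)$ is purely even while $c$ is (one checks) purely odd, I would express $VF\varphi$, $F\varphi$ and $H_c\varphi$ in terms of the derivatives of $u$ and $p=Fu$. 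The point of choosing $\varphi=V^2u+u$ rather than $u$ is that $p$ is a symmetric $2$-tensor, so $V^2 p = -4p$ on the odd-in-$v$ sector relevant here; applying $\mathrm{id}+V^2$ kills the degree-$0$ and degree-$2$ harmonics and isolates the piece where the square can be completed cleanly.

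Next I would substitute into the integrated identity $2\int H_c\varphi\cdot VF\varphi = |F\varphi|^2 + |H_c\varphi|^2$ and rewrite the left-hand side. The idea is that $VF\varphi$ will, after using the commutators and the cohomological equation $Fu=p$, split into a term proportional to $H_c\varphi$ plus a term proportional to $(\theta+c)V\varphi$ plus a genuine curvature-type remainder. Then $2H_c\varphi\cdot VF\varphi$ reorganises as $2|H_c\varphi|^2$ minus a perfect square $|2H_c\varphi-(\theta+c)V\varphi|^2$ (up to sign) plus $\int \R_c (V\varphi)^2$ with $\R_c = F(\theta+c)+(\theta+2c)(\theta+c)+K$. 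Collecting everything and moving the squares to one side yields precisely \eqref{EQNUhlmannPestov}. The form of $\R_c$ should be cross-checked against the Riccati equation \eqref{EQNriccati}, $F(c)+c^2+K-H_c(\lambda)+\lambda^2=0$: since $\lambda=V(\theta)$ and $H_c(\lambda)=H_c V(\theta)$, one expects the $\theta$-dependent terms in $\R_c$ to be exactly what is left over after the $F(c)+c^2+K$ part is rewritten using the Riccati identity, so that $\R_c$ is, morally, a shift of $\Kmag$ by an exact term.

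The main obstacle I anticipate is bookkeeping of the lower-order terms: both $\lambda=V(\theta)$ and $c$ contribute first-derivative terms through the commutators $[V,F]$ and $[V,H_c]$, and one must verify that all of these either cancel, get absorbed into the three squared norms on the left of \eqref{EQNUhlmannPestov}, or combine into the stated $\R_c(V\varphi)^2$ term — with no stray terms involving $F\varphi\cdot V\varphi$ or $H_c\varphi\cdot F\varphi$ surviving. This is where the hypotheses $\e$ divergence free (so no $V(\lambda)$-type nuisance terms from integration by parts beyond those already handled in \eqref{EQNPestovInt}) and $\lambda$ purely even are essential, and where Remark~\ref{RMKevenforpure}'s observation that the odd and even parts decouple lets me restrict to $p=q$ even and $u$ appropriately parity-typed. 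As in the Pestov lemma itself, once one knows the target identity \eqref{EQNUhlmannPestov} exists there is no conceptually hard step, so the detailed algebra can be deferred to the Appendix; the substantive content is the choice $\varphi=V^2u+u$ and the completion of the square, both of which follow the template of \cite{sharafutdinov-2000-}.
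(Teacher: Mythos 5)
Your outline reproduces the paper's main computational ideas (apply the integrated Pestov identity to $\varphi=V^2u+u$, use the commutators $[V,F]$ and $[V,H_c]$, exploit that $p$ is a symmetric $2$-tensor, and complete the square), but it mis-specifies the identity you start from, and this is not a cosmetic point. You propose to substitute into the curvature-free identity of Lemma \ref{THMpestov}, $2\int H_c\varphi\cdot VF\varphi=|F\varphi|^2+|H_c\varphi|^2$; that identity holds only when $c$ is one of the Riccati solutions $c^{s}$ or $c^{u}$ coming from the weak bundles. The lemma you are proving is a rewriting of the general identity (\ref{EQNPestovInt}), valid for an arbitrary $C^1$ function $c$, and this generality is exactly what is used later: Theorem B applies (\ref{EQNUhlmannPestov}) with $c=(c^s+c^u)/2$, which does \emph{not} satisfy the Riccati equation (\ref{EQNriccati}), and only for that averaged $c$ does one get $\R_c=(\theta+c^s)(\theta+c^u)<0$. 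If you start from Lemma \ref{THMpestov}, the right-hand side you can produce is forced to be $\int(\theta+c)^2(V\varphi)^2\,d\mu$ (which is the special case of $\R_c$ when $F(c)+c^2+\Kmag=0$), i.e.\ a nonnegative quantity attached only to $c=c^{s,u}$ --- formally consistent but useless for the intended application and not the stated lemma.

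Related to this, your account of where $\R_c$ comes from would fail: $VF\varphi$ contains no ``curvature-type remainder.'' Using only $[V,F]$, $[V,H_c]$ and $Fu=p$, one finds $VF\varphi=-2H_c\varphi+2(\theta+c)V\varphi+V^3p+4Vp$, and since $p$ is even of degree at most two in $v$ one has $V^3p+4Vp=0$, so $VF\varphi=-2H_c\varphi+2(\theta+c)V\varphi$ exactly. The curvature enters solely through the term $-\int(F(c)+c^2+\Kmag)(V\varphi)^2\,d\mu$ of (\ref{EQNPestovInt}); completing the square in $2\int H_c\varphi\cdot VF\varphi$ produces the extra $\int(\theta+c)^2(V\varphi)^2\,d\mu$, and the two integrands combine, using $X\theta+HV\theta=\div\e=0$, into $\R_c=F(\theta+c)+(\theta+2c)(\theta+c)+K$. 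Two further slips worth correcting: $\lambda=V(\theta)=\metric{\e}{iv}$ is odd in $v$, not even (its oddness is what decouples the parities in Remark \ref{RMKevenforpure}); and $1+V^2$ annihilates the degree-$\pm1$ Fourier modes of $u$, not the degree-$0$ and degree-$2$ modes --- the algebraic fact actually needed is $(V^2+4)Vp=0$ for a symmetric $2$-tensor $p$. With the starting identity corrected to (\ref{EQNPestovInt}) and the bookkeeping above, your plan coincides with the paper's proof.
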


\begin{proof} \begin{displaymath} VFu = FVu + [V, F]u = FVu + H_c u - (\theta + c)Vu, \end{displaymath} and similarly \begin{equations} V^2Fu &=& FV^2u + [V, F]Vu + H_c V u + [V, H_c]u \\ && \qquad -(V(\theta) + V(c))Vu - (\theta + c) V^2 u
\\ &=& FV^2u + H_c Vu - (\theta + c)V^2u + H_c V u - Fu + (V(c) + V(\theta))Vu \\ && \qquad -(V(\theta) + V(c))Vu - (\theta + c) V^2 u \\ V^2p&=& FV^2u + 2H_c Vu - 2(\theta + c)V^2u - p, \end{equations} where we have used $Fu = p$ for the last line.

Denote $\varphi = V^2u + u$. Using the equation for $V^2p$, \begin{equations} F\varphi &=& FV^2u + Fu \\ &=& V^2p - 2H_cVu +2(\theta + c)V^2u + p + p \\ \\ \Rightarrow VF\varphi &=& -2VH_cVu + 2(V(\theta) + V(c)) V^2u + 2(\theta + c)V^3u + V^3p + 2Vp \\ &=& -2H_cV^2u + 2FVu + 2(\theta + c)V^3u + V^3p + 2Vp \\ &=& -2H_cV^2u + 2VFu - 2H_cu + 2(\theta + c)Vu \\ && \qquad \qquad + 2(\theta + c)V^3u + V^3p + 2Vp \\ &=& -2H_c \varphi +2(\theta + c)V\varphi + V^3p + 4Vp.\end{equations}

As mentioned in Remark \ref{RMKevenforpure}, we have assumed a prescribed form for $p \,$; that it is even and up to quadratic in the velocities. But then $V^3p+4Vp = 0$, and so \begin{equation}
 VF \varphi = -2H_c \varphi + 2(\theta + c) V \varphi. 
\label{ex}
\end{equation} 
But then \begin{equations} 2 \int_{SM} H_c \varphi \cdot VF \varphi \, d\mu &=& -4 \, \NormSq{H_c \varphi}{L^2} + 4 \int_{SM} H_c \varphi \cdot (\theta + c)V \varphi \, d \mu \\ &=& -\NormSq{2H_c \varphi - (\theta + c) V \varphi}{L^2} + \int_{SM} (\theta + c)^2 (V \varphi)^2 \, d\mu \end{equations}

Substituting this into the Pestov identity, (\ref{EQNPestovInt}) in Section \ref{SECTpestov}, transforms the left hand side when applied to the function $\varphi = V^2u + u$: \begin{displaymath} \hspace{-2cm} -\NormSq{2H_c \varphi - (\theta + c) V \varphi}{L^2} + \int_{SM} (\theta + c)^2 (V \varphi)^2 \, d\mu \end{displaymath} \vspace{-4mm} \begin{displaymath} \hspace{3cm} = \NormSq{F \varphi}{L^2} + \NormSq{H_c \varphi}{L^2} - \int_{SM} (F(c) + c^2 + \Kmag)(V \varphi)^2 \, d\mu, \end{displaymath} and all that is left is to tidy the two integrands into one Riccati type function. \begin{equations} F(c) + c^2 + \Kmag + (\theta + c)^2 &=& F(c)+ c^2 + K - H_cV(\theta) + (V \theta)^2 + (\theta + c)^2 \\ &=& F(\theta + c) + (\theta + 2c)(\theta + c) + K, \end{equations} as $\e$ is divergence free, so we can use $X\theta + HV \theta = \div \e = 0$. \end{proof}

\begin{lemma} The two functions induced by the stable and unstable weak bundles of the flow, $c^s$ and $c^u$ of Section \ref{SECTjacobi}, are such that everywhere\begin{displaymath} (\theta + c^s)(\theta + c^u)<0 \end{displaymath} on a negatively curved surface. \end{lemma}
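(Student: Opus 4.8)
The plan is to reduce to a scalar Riccati equation satisfied by $w^{s}:=\theta+c^{s}$ and $w^{u}:=\theta+c^{u}$, and then to play the Anosov condition (non-vanishing of $w^{s}-w^{u}$) against the sign of the curvature by a finite-time blow-up argument for an auxiliary function on $SM$; reversibility of the pure thermostat turns out not to be needed.

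\emph{Step 1: a scalar Riccati equation for $w^{s},w^{u}$.} Each of $c=c^{s},c^{u}$ satisfies the Riccati equation (\ref{EQNriccati}), $F(c)+c^{2}+\Kmag=0$, while the algebraic identity obtained at the end of the proof of Lemma~\ref{THMUhlmannsPestov} (which only uses $\div\e=0$) reads $\R_{c}=(\theta+c)^{2}+F(c)+c^{2}+\Kmag$; hence $\R_{c^{s}}=(\theta+c^{s})^{2}$ and $\R_{c^{u}}=(\theta+c^{u})^{2}$. On the other hand $\R_{c}=F(\theta+c)+(\theta+2c)(\theta+c)+K$ by definition, so, writing $w=\theta+c$ and comparing the two expressions, one gets $F(w)=-w^{2}+\theta w-K$. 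Thus along every orbit of $\phi$,
\[
\dot w^{s}=-(w^{s})^{2}+\theta\,w^{s}-K,\qquad \dot w^{u}=-(w^{u})^{2}+\theta\,w^{u}-K.
\]

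\emph{Step 2: the auxiliary function.} Set $\delta:=w^{s}-w^{u}=c^{s}-c^{u}$. Since $\phi$ is Anosov we have $c^{s}\neq c^{u}$ everywhere (as already used in the proof of Proposition~\ref{THMkeyinequality}), so $\delta$ is a continuous nowhere-vanishing function on the compact manifold $SM$; as the claim is symmetric in $s$ and $u$ we may assume $\delta>0$, and then $0<\delta_{0}\le\delta\le\delta_{1}<\infty$ for suitable constants. Put $\eta:=w^{s}/\delta$, a bounded ($C^{1}$) function on $SM$, and note $w^{u}=(\eta-1)\delta$, whence $w^{s}w^{u}=\eta(\eta-1)\delta^{2}$. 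A short computation from the two Riccati equations, in which the $\theta$-terms cancel, gives $F(\eta)=\delta\eta^{2}-\delta\eta-K/\delta$. The point of this choice is that, $M$ being negatively curved, $-K/\delta\ge m/\delta_{1}>0$ with $m:=\min_{SM}(-K)>0$, so that
\[
F(\eta)\ \ge\ -K/\delta\ >\ 0\qquad\text{whenever }\eta\le0\ \text{or}\ \eta\ge1,
\]
the first case because then $\delta\eta^{2}\ge0$ and $-\delta\eta\ge0$, the second because then $\delta\eta(\eta-1)\ge0$.

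\emph{Step 3: blow-up, and the main obstacle.} Suppose $\eta(x_{0},v_{0})\ge1$. Along the forward orbit $\eta$ is strictly increasing wherever $\eta\ge1$, hence remains $\ge1$ for all $t\ge0$; since $\dot\eta=F(\eta)\ge m/\delta_{1}$ on $\{1\le\eta\le2\}$ the value $2$ is reached in finite time, after which $F(\eta)\ge\delta_{0}\eta(\eta-1)\ge\tfrac12\delta_{0}\eta^{2}$, forcing $\eta(\phi_{t}(x_{0},v_{0}))\to+\infty$ in finite time, which contradicts the boundedness of $\eta$. Symmetrically, $\eta(x_{0},v_{0})\le0$ drives $\eta\to-\infty$ along the backward orbit in finite time, again impossible. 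Hence $0<\eta<1$ everywhere, i.e. $(\theta+c^{s})(\theta+c^{u})=w^{s}w^{u}=\eta(\eta-1)\delta^{2}<0$. I expect the only delicate point to be the bookkeeping in this last step — that $\eta$ really leaves any compact subinterval of $[0,1]^{c}$ in finite time — which rests on the uniform bound $-K/\delta\ge m/\delta_{1}$; this, together with the observation that the equilibria of the $\eta$-Riccati equation, i.e. the roots $\tfrac12\pm\tfrac12\sqrt{1+4K/\delta^{2}}$ of $\delta\eta^{2}-\delta\eta-K/\delta$, lie in $[0,1]$ precisely because $K<0$, is what makes the argument work.
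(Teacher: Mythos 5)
Your argument is correct, and it reaches the lemma by a genuinely different route from the paper's. Both proofs start from the same Riccati equation for $w^{s,u}=\theta+c^{s,u}$: your $F(w)=-w^{2}+\theta w-K$ is exactly the paper's $F(\theta+c^{s,u})+c^{s,u}(\theta+c^{s,u})+K=0$, and your observation that the algebraic identity from Lemma \ref{THMUhlmannsPestov} uses only $\div\e=0$ (together with \eqref{EQNriccati}) is right. From there, however, the paper exploits reversibility of the pure thermostat, $c^{s}(x,-v)=-c^{u}(x,v)$, to reduce the claim to the non-vanishing of $\theta+c^{s,u}$, and rules out a zero by noting that at any zero $F(\theta+c^{s})=-K>0$, so the function vanishes at most once along an orbit, which is incompatible with the density of closed orbits (Ghys \cite{ghys-1984-}). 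You instead form the ratio $\eta=(\theta+c^{s})/(c^{s}-c^{u})$, use the Anosov fact $c^{s}\neq c^{u}$ everywhere (so $\delta=c^{s}-c^{u}$ is continuous, of constant sign on the connected $SM$, and pinched between positive constants), compute $F(\eta)=\delta\eta(\eta-1)-K/\delta$ (which checks out), and force $0<\eta<1$ by an ODE comparison: outside $(0,1)$ one has $F(\eta)\geq -K/\delta\geq m/\delta_{1}>0$, so $\eta$ would be unbounded along the forward (resp.\ backward) orbit, contradicting its boundedness on the compact $SM$; in fact the uniform linear growth already gives the contradiction, so your quadratic blow-up phase, while fine, is not needed. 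What your route buys: it avoids both the time-reversal symmetry and any appeal to transitivity or density of closed orbits, relying only on $K<0$, the uniform separation of $c^{s}$ and $c^{u}$, boundedness and $C^{1}$-regularity of the weak bundles — note, though, that purity of the thermostat is still used, entering through the specific form $\lambda=V(\theta)$ in the Riccati identity rather than through the flip map. What the paper's route buys is brevity and a softer, more qualitative mechanism (transversal zeros versus periodicity).
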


\begin{proof} The fact that pure Gaussian thermostats are reversible gives
$c^{s}(x,-v)=-c^{u}(x,v)$. Thus
\[\theta(x,-v)+c^{s}(x,-v)=-(\theta(x,v)+c^{u}(x,v))\]
and it suffices to show that the functions $\theta+c^{s,u}$ do not vanish.
We know that
\[F(\theta+c^{s,u})+c^{s,u}(c^{s,u}+\theta)+K=0.\]
Suppose $\theta+c^s$ vanishes at $(x,v)$. Since $K<0$, then
$F(\theta+c^{s})>0$ at $(x,v)$. It follows that $(\theta+c^s)(\phi_{t}(x,v))$
vanishes for at most one value of $t\in\mathbb R$.
Recall that by a result of Ghys \cite{ghys-1984-} the closed orbits of $\phi$
are dense, hence we can find $(y,w)$ close to $(x,v)$ such that
$\theta+c^s$ vanishes at $(y,w)$ and the orbit of $(y,w)$ is closed.
If $T$ is the period of the closed orbit we have
$(\theta+c^s)(y,w)=(\theta+c^s)(\phi_{T}(y,w))=0$ which is a contradiction.

 \end{proof}

\begin{proposition} \label{THMnegativefunction} For a thermostat on a negatively curved surface, there exists a $C^1$ function $c: SM \to \RR$ such that $\R_c$ is everywhere strictly less than zero. \end{proposition}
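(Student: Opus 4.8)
The plan is to produce the required $c$ explicitly as the average of the two weak-bundle Riccati solutions $c^{s}$ and $c^{u}$ from Section~\ref{SECTjacobi}. Set
\[
c := \tfrac{1}{2}\bigl(c^{s}+c^{u}\bigr).
\]
This is of class $C^{1}$ since $c^{s}$ and $c^{u}$ are (the weak bundles are $C^{1}$, \cite{hirsch-1970-}). The claim is that for this choice
\[
\R_{c} = (\theta+c^{s})(\theta+c^{u}),
\]
after which the proposition is immediate: the lemma just above asserts precisely that $(\theta+c^{s})(\theta+c^{u})<0$ at every point of $SM$ when $K<0$.

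To check the claim I would use the two Riccati equations in the form appearing in the proof of that lemma,
\[
F(\theta+c^{s})+c^{s}(\theta+c^{s})+K=0,\qquad F(\theta+c^{u})+c^{u}(\theta+c^{u})+K=0 .
\]
Write $a:=\theta+c^{s}$ and $b:=\theta+c^{u}$, so these read $F(a)=-a^{2}+\theta a-K$ and $F(b)=-b^{2}+\theta b-K$. Since $\theta+c=\tfrac12(a+b)$ and $\theta+2c=a+b-\theta$, linearity of $F$ gives $F(\theta+c)=-\tfrac12(a^{2}+b^{2})+\tfrac12\theta(a+b)-K$, while $(\theta+2c)(\theta+c)=\tfrac12(a+b)^{2}-\tfrac12\theta(a+b)$. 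Substituting into $\R_{c}=F(\theta+c)+(\theta+2c)(\theta+c)+K$, the $\pm\tfrac12\theta(a+b)$ terms and the $\mp K$ terms cancel, leaving
\[
\R_{c}=-\tfrac12(a^{2}+b^{2})+\tfrac12(a+b)^{2}=ab=(\theta+c^{s})(\theta+c^{u}),
\]
as claimed.

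I do not expect any genuine obstacle here: all the real content is in the preceding lemma (which combines $K<0$, reversibility of the pure thermostat, and density of closed orbits via Ghys), and the present step is just the algebraic substitution above. The only delicate point is bookkeeping with the divergence-free hypothesis: it was already absorbed once into the identity $\R_{c}=F(\theta+c)+(\theta+2c)(\theta+c)+K$ in Lemma~\ref{THMUhlmannsPestov}, so it must not be invoked a second time here. (It is worth noting that the even blunter choice $c=-\theta$ makes $\theta+c\equiv 0$ and hence $\R_{c}=K<0$ outright; the averaged choice is simply the one consistent with the preceding development.)
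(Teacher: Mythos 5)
Your proposal is correct and follows essentially the same route as the paper: take $c=\tfrac12(c^{s}+c^{u})$, use the (solenoidal-rearranged) Riccati equations $F(\theta+c^{s,u})+c^{s,u}(\theta+c^{s,u})+K=0$ to verify $\R_{c}=(\theta+c^{s})(\theta+c^{u})$, and conclude by the preceding lemma; your substitution $a=\theta+c^{s}$, $b=\theta+c^{u}$ is only a cosmetic repackaging of the paper's addition of the two equations. (Your aside about $c=-\theta$ matches the paper's own closing remark on the alternative approach.)
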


\begin{proof} We already have two functions, $c^s$ and $c^u$ induced by the stable and unstable weak bundles of the flow, which satisfy a thermostat type Riccati equation (\ref{EQNriccati}). After a little rearrangement (and since $\e$ is solenoidal), and then adding the two almost identical equations we get \begin{equation} \label{EQNricadded} F(c^s + c^u + 2\theta) + c^s(c^s + \theta) + c^u(c^u + \theta) + 2K = 0. \end{equation}

Recall $\R_c = F(\theta + c) + (\theta + 2c)(\theta + c) + K.$ Substituting $2c = c^s + c^u$ into (\ref{EQNricadded}), \begin{equations} 0 &=& F(2c + 2\theta) + c^s(2c - c^u + \theta) + c^u(2c - c^s + \theta) + 2K \\ &=& F(2c + 2\theta) + (c^s + c^u)(2c + \theta) - 2 c^s c^u + 2K \\ &=& F(2c + 2\theta) + (2c + 2\theta)(2c + \theta) - 2\theta (2c + \theta) - 2 c^s c^u + 2K \\ &=& 2\R_c -2(\theta + c^s)(\theta + c^u). \end{equations} Hence $\R_{(c^s+c^u)/2} = (\theta + c^s)(\theta + c^u)$, which is less than zero by the previous Lemma. \end{proof}

\reiterate{1}{\begin{mythm} For a pure Gaussian thermostat on a negatively curved surface, the X-ray transform is s-injective. \end{mythm}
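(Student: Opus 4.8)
The plan is to bootstrap from the integrated Pestov identity of Lemma~\ref{THMUhlmannsPestov} together with the negativity result of Proposition~\ref{THMnegativefunction}, after first reducing the statement to the case of a purely even symmetric $2$-tensor. Suppose $I[q,\sigma]=0$. By the smooth Liv\v sic theorem there is $u\in C^{\infty}(SM)$ with $Fu=q+\sigma$. Since $F=X+V(\theta)\cdot V$ reverses parity in the velocity (both $X$ and the function $V(\theta)$ are odd, while $V$ preserves parity), the equation splits as $F(u^{o})=q$ and $F(u^{e})=\sigma$, so in particular $\sigma$ is a thermostatic coboundary of degree at most one. By the degree $\le 1$ result of \cite[Theorem B]{dairbekov-2007a} (recalled in the introduction), $\sigma$ must then be exact, say $\sigma=dh$, so that $[0,\sigma]=\id[0,h]$ is a potential pair. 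As $\id$ is block diagonal for a pure thermostat ($f=0$), it remains only to show that the odd primitive $u^{o}$ of $q$ is the restriction to $SM$ of a $1$-form: this will give $[q,0]=\id[u^{o},0]$, and by linearity of $\id$ we then get $[q,\sigma]=\id[u^{o},h]$, which is s-injectivity.

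To show $u^{o}$ has degree one I would apply Lemma~\ref{THMUhlmannsPestov} with $u=u^{o}$, setting $\varphi=V^{2}u^{o}+u^{o}$, an odd function on $SM$, and choose the $C^{1}$ function $c=(c^{s}+c^{u})/2$ furnished by Proposition~\ref{THMnegativefunction}, for which $\R_{c}<0$ everywhere. Then (\ref{EQNUhlmannPestov}) reads $|F\varphi|^{2}+|H_{c}\varphi|^{2}+|2H_{c}\varphi-(\theta+c)V\varphi|^{2}=\int_{SM}\R_{c}\,(V\varphi)^{2}\,d\mu$, whose left-hand side is a sum of $L^{2}$-norms of squares, hence $\ge 0$, and whose right-hand side is $\le 0$. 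Since $SM$ is compact and $\R_{c}$ is continuous and strictly negative, $\R_{c}\le-\varepsilon<0$ for some $\varepsilon$, so both sides must vanish; in particular $\int_{SM}(V\varphi)^{2}\,d\mu=0$, whence $V\varphi\equiv 0$. An odd function on $SM$ that is constant along the circle fibers of $SM\to M$ is identically zero, so $\varphi=V^{2}u^{o}+u^{o}=0$.

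Decomposing $u^{o}$ into Fourier modes along the fibers, on which $V^{2}$ acts by $-k^{2}$ on the $k$-th mode, the relation $V^{2}u^{o}+u^{o}=0$ annihilates every mode with $k^{2}\ne 1$; hence $u^{o}(x,v)=\eta_{x}(v)$ for a $1$-form $\eta$, and $q=F\eta=d^{s}\eta+V(\theta)\cdot V\eta$ exhibits $[q,0]$ as $\id[\eta,0]$, finishing the argument. The step I expect to be the main obstacle — indeed the crux of the whole matter — is producing a $C^{1}$ function $c$ making the Riccati-type quantity $\R_{c}$ strictly negative on a negatively curved surface despite $\lambda$ possibly being very large, together with the ``completed square'' rewriting of the Pestov identity; but these are exactly what Proposition~\ref{THMnegativefunction} and Lemma~\ref{THMUhlmannsPestov} already supply, so the remaining work is only the parity bookkeeping that isolates the even part and the elementary passage from $V\varphi\equiv 0$ to ``$u^{o}$ has degree one''.
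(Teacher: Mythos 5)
Your proposal is correct and takes essentially the same route as the paper's own proof: split off the odd part $\sigma$ via \cite[Theorem B]{dairbekov-2007a}, then feed the remaining even equation into Lemma \ref{THMUhlmannsPestov} with the $c$ supplied by Proposition \ref{THMnegativefunction}. The only (harmless) difference is at the end: you use strict negativity of $\R_c$ to get $V\varphi\equiv 0$ and then oddness of $\varphi$ to force $\varphi=0$, whereas the paper uses $F\varphi=0$ together with transitivity of the Anosov flow to conclude $\varphi$ is constant; both give that the primitive is of degree at most one, hence that $[q,\sigma]$ is a potential pair.
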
}

\begin{proof} Let $Fu = p = q + \sigma$. Since the flow is Anosov, the simpler cohomological equation \begin{displaymath} F(u) = \sigma \end{displaymath} has a solution iff $\sigma = F(h)$ for some $h \in C^\infty(M)$, using \cite[Theorem B]{dairbekov-2007a}. So we replace $u$ by $u-h$, and assume $p = q$.

Using Proposition \ref{THMnegativefunction} in (\ref{EQNUhlmannPestov}) of Lemma \ref{THMUhlmannsPestov}, with $\varphi = V^2u + u$, \begin{displaymath} |F\varphi|^2 + |H_c \varphi|^2 + |2 \cdot H_c\varphi - (\theta + c) \cdot V\varphi|^2 \leq 0. \end{displaymath} In particular, $F\varphi$ is zero and   so $\varphi$ is an integral of the flow. But the flow is Anosov which implies the only integrals are constants.

We have $V^2u + u = k$, some constant, which can be rewritten \begin{displaymath} V^2(u - k) + (u - k) =0. \end{displaymath}
For $(x, v) \in SM$ we can write $v = (\cos \omega, \sin \omega) \in T_x M$ for some real $\omega$. Then there exists $\alpha, \beta \in C^\infty(M)$ who induce $\psi = \psi(\alpha, \beta) \in \Omega^1(M)$ such that \begin{displaymath} u(x, v) = k + \alpha(x) \cos \omega + \beta(x) \sin \omega = k + \psi(v). \end{displaymath} Thus $u = k + \psi$, a constant plus a one form. Hence for the general coboundary $p = q + \sigma$, \begin{displaymath} Fu = p \, \Rightarrow \, u(x,v) = h(x) + \psi_x(v). \end{displaymath} \end{proof}

\begin{remark} The hypotheses in Theorem B can be somewhat relaxed.
All that is needed is that
the thermostat $\phi$ is Anosov and $(\theta + c^s)(\theta + c^u)\leq 0$ everywhere.
\end{remark}

\begin{remark} There is an alternative approach to Theorem B.
Let $c:=-\theta$. For this choice of $c$, a simple calculation shows that
\[[V,F]=H_c,\;\;\;[V,H_{c}]=-F,\;\;\;\;[F,H_{c}]=-V(\theta)F+\theta H_{c}
+(K-\mbox{\rm div}\,\e)V\]
and $F(c) + c^2 + \Kmag=K-\mbox{\rm div}\,\e$.
If we assume $\e$ to be divergence free and we apply the Pestov
identity (\ref{EQNPestovInt}) to $\varphi$, we obtain
\[ 2\int_{SM} H_c\varphi\cdot VF\varphi \, d\mu = \NormSq{F\varphi}{L^2} + \NormSq{H_c\varphi}{L^2} - \int_{SM} K (V\varphi)^2 \, d\mu. \]
But for our choice of $c$, $VF\varphi=-2H_{c}\varphi$ (cf. (\ref{ex})) and thus
\[ -5\NormSq{H_c\varphi}{L^2} = \NormSq{F\varphi}{L^2} - \int_{SM} K (V\varphi)^2 \, d\mu. \]
Hence if $K\leq 0$ we see that $F(\varphi)=H_{c}(\varphi)=0$ which implies
that $\varphi$ is constant. From this, Theorem B follows as above.

\end{remark}

\setcounter{section}{0}
\renewcommand{\thesection}{\Alph{section}}
\section{Appendix}

\subsection{A Pestov identity for thermostats on surfaces}

We modify the standard vector fields over $SM$, $\{X, H, V\}$ as defined in \cite{singer-1988-}, to \begin{displaymath} \left\{ \begin{matrix} F = X + \lambda V \\ H_c = H + c V \\ V \end{matrix} \right\} \end{displaymath} with $\lambda$ and $c$ both functions on $SM$. Notice that $F$ is the vector field induced by the thermostat flow $\ddot{\gamma} = \lambda(\gamma, \dot{\gamma}) \, i\dot{\gamma}$, and $c$ could be chosen such that $F$ and $H_c$ span a weak bundle if the bundle were always transverse to the vertical.

By a trivial calculation, the modified commutation relations are \begin{displaymath} [V, F] = H_c + (V(\lambda) - c) V, \end{displaymath} \begin{displaymath} [V, H_c] = -F +(V(c) + \lambda) V, \end{displaymath} \begin{displaymath} [F, H_c] = -\lambda F - c H_c + (F(c) + c^2 + \Kmag ) V, \end{displaymath} where $\Kmag =K-H_{c}(\lambda)+\lambda^2 $.

\begin{lemma} Rewritten in terms of the basis $\{ F, H_c, V\}$, the Pestov identity becomes \begin{equations}  2H_c u \cdot VFu &=&
 (Fu)^2 + (H_c u)^2 - (F(c) + c^2 + \Kmag)(V u)^2 \\  && \qquad + F(H_c u \cdot V u) + V(\lambda) F u \cdot V u \\ && \qquad - H_c(F u \cdot V u) - V(c) F u \cdot V u \quad + V(H_c u \cdot F u). \end{equations} \end{lemma}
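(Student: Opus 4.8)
The plan is to treat the statement as a purely algebraic identity: it holds for every twice-differentiable $u:SM\to\RR$ and every choice of functions $\lambda,c$ on $SM$, and its sole input is the commutation algebra for $\{F,H_c,V\}$ recorded immediately above, namely
\[
[V,F] = H_c + (V(\lambda)-c)V,\quad [V,H_c] = -F + (V(c)+\lambda)V,\quad [F,H_c] = -\lambda F - cH_c + (F(c)+c^2+\Kmag)V.
\]
So I would not return to the Riemannian geometry of $SM$; I would simply manipulate three vector fields obeying these three relations and the Leibniz rule. (One could instead substitute $X=F-\lambda V$ and $H=H_c-cV$ into the classical Pestov identity of \cite{sharafutdinov-2000-}, but that substitution is itself a computation of comparable length, so the direct route is cleaner.)

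First I would isolate the ``divergence'' combination
\[
\mathcal{D} := F(H_c u\cdot Vu) - H_c(Fu\cdot Vu) + V(H_c u\cdot Fu),
\]
which collects precisely the three terms on the right-hand side carrying an outer $F$, $H_c$ or $V$, and expand each of its summands by the Leibniz rule. The six resulting products regroup as $([F,H_c]u)\,Vu + ([V,H_c]u)\,Fu + (H_c u)(FVu + VFu)$; in the last bracket I would write $FVu = VFu - [V,F]u$, so that $(H_c u)(FVu+VFu) = 2\,H_c u\cdot VFu - (H_c u)([V,F]u)$ and the sought cross term $2\,H_c u\cdot VFu$ appears explicitly.

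Next I would substitute the three explicit commutators into $\mathcal{D} = 2\,H_c u\cdot VFu + ([F,H_c]u)Vu - (H_c u)([V,F]u) + (Fu)([V,H_c]u)$ and collect the monomials in $Fu$, $H_c u$, $Vu$: the $([F,H_c]u)Vu$ term yields $-\lambda\,Fu\,Vu - c\,H_c u\,Vu + (F(c)+c^2+\Kmag)(Vu)^2$; the $-(H_c u)[V,F]u$ term yields $-(H_c u)^2 - (V(\lambda)-c)\,H_c u\,Vu$; and the $(Fu)[V,H_c]u$ term yields $-(Fu)^2 + (V(c)+\lambda)\,Fu\,Vu$. The coefficient of $Fu\,Vu$ collapses to $V(c)$ and that of $H_c u\,Vu$ to $-V(\lambda)$, so $\mathcal{D} = 2\,H_c u\cdot VFu - (Fu)^2 - (H_c u)^2 + (F(c)+c^2+\Kmag)(Vu)^2 + V(c)\,Fu\,Vu - V(\lambda)\,H_c u\,Vu$. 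Rearranging for $2\,H_c u\cdot VFu$ and re-expanding $\mathcal{D}$ gives the asserted identity, with the first-order terms appearing exactly as in (\ref{EQNPestov}).

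The computation is entirely mechanical, so the only real hazard is bookkeeping — in particular tracking signs when the first-order correction terms $(V(\lambda)-c)V$ and $(V(c)+\lambda)V$ inside the commutators recombine with the $-\lambda F$ and $-cH_c$ pieces to leave precisely $-V(c)$ and $+V(\lambda)$. As a sanity check I would specialise to $\lambda=c=0$, where $F=X$, $H_c=H$ and $\Kmag=K$, and confirm that the identity reduces to the classical Pestov identity of \cite{sharafutdinov-2000-}, and set $c=0$ only to recover the version used in \cite{dairbekov-2007a}.
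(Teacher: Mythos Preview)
Your proposal is correct and follows essentially the same route as the paper: a direct bookkeeping computation using only the Leibniz rule and the three commutator relations for $\{F,H_c,V\}$, with no further geometric input. The only difference is organisational---you collect the three total-derivative terms into $\mathcal{D}$ first and work backwards, whereas the paper starts from $2H_c u\cdot VFu - V(H_c u\cdot Fu)=H_c u\cdot VFu - VH_c u\cdot Fu$ and works forwards---and your computation in fact yields the term $+V(\lambda)\,H_c u\cdot Vu$ as in (\ref{EQNPestov}), which is the correct form (the Appendix statement's $V(\lambda)\,Fu\cdot Vu$ is a typo).
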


\begin{proof} \begin{equations} && 2H_c u \cdot V F u - V(H_c u \cdot F u)
\\ &=& H_c u \cdot VF u - VH_c u \cdot Fu
\\&=& H_c u \cdot ( F V u + [V, F]u) - Fu \cdot (H_c V u + [V, H_c]u)
\\&=& H_c u \cdot (F V u + H_c u + (V(\lambda) - c)Vu) \\ &&\quad - Fu \cdot (H_c V u - Fu + (\lambda + V(c)) \cdot Vu)
\\&=& (H_cu)^2 + (Fu)^2 + H_cu \cdot FVu + (V(\lambda) - c) \cdot Vu \cdot H_c u \\ &&\quad - Fu \cdot H_cVu - (\lambda + V(c)) \cdot Vu \cdot Fu
\\&=& (H_cu)^2 + (Fu)^2 + F(H_cu \cdot Vu) \\ && \quad -FH_cu \cdot Vu + H_c F u \cdot Vu - H_c(Fu \cdot Vu) \\&& \quad + ((V(\lambda) - c)H_cu
- (\lambda + V(c))Fu)\cdot Vu
\\&=& (H_cu)^2 + (Fu)^2 + F(H_cu \cdot Vu) - H_c(Fu \cdot Vu) \\ && \quad + ([H_c, F]u + (V(\lambda) - c) H_c u + (\lambda + V(c))Fu) \cdot Vu
\\&=& (H_cu)^2 + (Fu)^2 + F(H_cu \cdot Vu) - H_c(Fu \cdot Vu) \\ && - \left( (F(c) + c^2 + \Kmag ) Vu - V(\lambda) H_cu - V(c)Fu \right) \cdot Vu
\\&=& (Fu)^2 + (H_c u)^2 - (F(c) + c^2 + \Kmag)(V u)^2 + \\ && \quad F(H_c u \cdot V u) + V(\lambda) F u \cdot V u \\ && \quad - H_c(F u \cdot V u) - V(c) F u \cdot V u + V(H_c u \cdot F u). \end{equations} \end{proof}

\subsection{Jacobi fields for thermostats} \label{SECTthermojacobi}

Here $\phi_t$ is the thermostat flow in the tangent bundle. We consider variations of a path $\gamma(t) = \pi \circ \phi_t (x,v)$, for some $(x,v) \in SM$. Notice $\gamma$ is defined on $M$, rather than on $SM$ as elsewhere in the paper.

We let $f(s,t) = \pi(\phi_{t}(Z(s)))$ be a variation of $\gamma$; $f$ depends on a choice of curve in $TM$ called $Z$, with $Z(0) = (x,v)$.  The image of $f$ is a small rectangle covering $\gamma$.  Further, we define nearby paths $\gamma_s = f(s,\cdot)$ and the \emph{Jacobi fields} (dependent on $Z$) as $J = \partial f/ \partial s (0, \cdot)$.

By the definition of curvature \begin{equations} \frac{D}{ds} \frac{D}{dt} \frac{\partial f}{\partial t} &=& \frac{D}{dt} \frac{D}{dt} \frac{\partial f}{\partial s} + R(\frac{\partial f}{\partial t}, \frac{\partial f}{\partial s})\frac{\partial f}{\partial t} \\ \frac{D}{ds} \Big( \frac{D \dot{\gamma_s}}{dt} \Big) \Big|_{s=0} &=& \ddot{J} + R(\dot{\gamma}, J) \dot{\gamma}. \end{equations}

Using $D/ds|_{s=0} = \nabla_J$ on vectors over $M$ and that $\lambda = \lambda( \gamma_s(t), \dot{\gamma}_s(t))$, a function on $SM$, we have 
\begin{equations} \frac{D}{ds} \Big( \frac{D \dot{\gamma_s}}{dt} \Big) \Big|_{s=0} = \frac{D}{ds} ( \lambda \, i \dot{\gamma_s})|_{s=0} &=& \frac{D}{ds} \lambda( \gamma_s(t), \dot{\gamma}_s(t))|_{s=0} \cdot i \dot{\gamma} + 0 + \lambda \, i \nabla_J (\dot{\gamma}) \\ &=& d\lambda(J, \dot{J})\, i \dot{\gamma} + \lambda \, i \frac{D}{dt} \frac{df(s,t)}{ds}\Big|_{s=0} \\&=& d\lambda(J, \dot{J}) \, i \dot{\gamma} + \lambda \, i \dot{J}. \end{equations} 
On the first line we made use of the fact that the 1-1 tensor $i$ is parallel with respect to the Riemmanian metric. Thus the generalised Jacobi equation is 
\begin{displaymath} 
\ddot{J} + R(\dot{\gamma}, J) \dot{\gamma} - d\lambda(J, \dot{J}) \, i \dot{\gamma} - \lambda i \dot{J} = 0. 
\end{displaymath}
Notice that $d\lambda (J, \dot{J}) = (\mbox{horizontal lift of }J)(\lambda) + (\mbox{vertical lift of }\dot{J})(\lambda)$.

As we are working on a surface we can study a general Jacobi field in terms of two real variables of $t$, say $x$ and $y$, with
\begin{displaymath}
J(t) = x(t) \, \dot{\gamma}(t) + y(t) \, i \dot{\gamma}(t).
\end{displaymath}
The horizontal lift of $J$ is then $xX + yH$. Further, if the Jacobi field has initial condition $\xi\in T_{(x,v)}SM$ then, $\metric{\dot{J}}{ \dot{\gamma}} = 0$ which implies $\dot{x} = \lambda y$ and that the vertical lift of $\dot{J}$ is $(\dot{y} + \lambda x)V$. Considering the generalised Jacobi equation in the $i \dot{\gamma}$ direction then,
\begin{displaymath}
\ddot{y} + \dot{\lambda} x + \lambda^2 y + K y - x X(\lambda) - y H (\lambda) - (\dot{y} + \lambda x ) V(\lambda) = 0,
\end{displaymath}
\begin{displaymath}
\Rightarrow \quad \ddot{y} - V(\lambda) \, \dot{y} + (K - H (\lambda) + \lambda^2) \, y = 0,
\end{displaymath}
since $X(\lambda) + \lambda V(\lambda) = F(\lambda) = \dot{\lambda}$, and the analysis is complete on substituting $c(t) = \dot{y}/y$.
Note that in the notation of Section \ref{SECTjacobi},
\[S(J)=\dot{J}=xS(\dot{\gamma})+yS(i\dot{\gamma})=(\lambda x+cy)i\dot{\gamma}
=(\lambda x+\dot{y})i\dot{\gamma}.\]

\bibliography{Master_no_year}
\bibliographystyle{plain}

\noindent
\textsc{DPMMS, University of Cambridge, Cambridge, CB3 0WB, UK.} \\ 
\noindent
\texttt{j.d.t.jane@dpmms.cam.ac.uk}

\noindent
\texttt{g.p.paternain@dpmms.cam.ac.uk}

\end{document}